\newtheorem{thm}{Theorem}[section]
\newtheorem{prop}[thm]{Proposition}
\newtheorem{cor}[thm]{Corollary}
\newtheorem{lemma}[thm]{Lemma}
\newtheorem{preremark}[thm]{Remark}
\newenvironment{remark}{\begin{preremark}\rm}{\medskip \end{preremark}}
\numberwithin{equation}{section}
\newcommand{\abs}[1]{\left\vert#1\right\vert}
\newcommand{\R}{\mathbb R}
\newcommand{\al}{\alpha}
\newcommand{\bt}{\beta}
\DeclareMathOperator{\codim}{codim}
\newcommand{\Xm}[1]{\mathfrak X(#1)}
\newcommand{\dd} {\mathrm{d}}
\DeclareMathOperator{\dv}{div}
\DeclareMathOperator{\Def}{Def}
\DeclareMathOperator{\Ric}{Ric}
\DeclareMathOperator{\two}{\mathrm I\mathrm I}
\DeclareMathOperator{\tr}{tr}
\def\be{\begin{equation}}
\def\ee{\end{equation}}
\def\bs{\begin{split}}
\def\ess{\end{split}}
\def\o{\omega}
\newcommand{\vH}{\textbf H}
\newcommand{\ip}[1]{\langle{#1}\rangle}
\newcommand{\tM}{\tilde M}
\newcommand{\tg}{\tilde g}
\newcommand{\tn}{\tilde \nabla}
\def\ag{\abs{\nabla\rho}}
\begin{document}
\title[Gauss formulas for the Laplacians]{The Gauss formulas for Laplacians on submanifolds}
\author[Chan]{Chi Hin Chan}
\address{Department of Applied Mathematics, National Yang Ming Chiao Tung University,1001 Ta Hsueh Road, Hsinchu, Taiwan 30010, ROC}
\email{cchan@math.nctu.edu.tw}

\author[Czubak]{Magdalena Czubak}
\address{Department of Mathematics\\
University of Colorado Boulder\\ Campus Box 395, Boulder, CO, 80309, USA}
\email{czubak@math.colorado.edu}

\begin{abstract}
There are several types of Laplacians of a vector field on a Riemannian manifold.  These include the Bochner and the Hodge Laplacian.  The Gauss formula for the Levi-Civita connection relates the extrinsic connection to the intrinsic connection.  We extend the Gauss formula for the connection to formulas for the different types of Laplacians of a vector field on a submanifold of any codimension $k\geq 1$.  In the process, we derive a Gauss formula for the Ricci operator, formulas for the divergence of the second fundamental form, and a formula for the Laplacian of a $1$-form on a surface of revolution in terms of the Lie derivatives.  The formulas have applications to the study of the formulation of the incompressible Navier-Stokes equations on a Riemannian manifold.
\end{abstract}
\subjclass[2010]{53B25, 76D05, 35Q35, 58J70;}
\keywords{Gauss formula, shape operator, Laplacian, Lie derivative, viscosity operator, submanifold, hypersurface, restriction, extension}
\maketitle

\tableofcontents
\section{Introduction}

Submanifold geometry studies the interplay between intrinsic and extrinsic structures of manifolds embedded in higher dimensional ambient manifolds. A fundamental identity in this setting is the Gauss formula for the Levi-Civita connection.   The formula decomposes the covariant derivative into a sum of tangential and normal components, and it relates the intrinsic connection to the extrinsic connection.  The goal of this article is to extend this formula to the Gauss formulas for the different Laplacians acting on a vector field on a Riemannian submanifold.  We begin by reviewing the Gauss formula for the Levi-Civita connection.

Let $k\geq 1$, and $(M^n,g)$ be an embedded Riemannian submanifold in $(\tilde M^{n+k},\tg)$, $M^n \hookrightarrow \tilde M^{n+k}$, let $\nabla$ be the Levi-Civita connection on $M$, and let $\tilde \nabla$ be the Levi-Civita connection on $\tilde M$.  Next, let $X, v$ be vector fields on $M$, $p \in M$, and $X, v$ be extensions of $X, v$ to a neighborhood of $p$ in $\tilde M$, still denoted by $X, v$. Then the Gauss formula reads 
 \be\label{GaussCov}
 \tn_X v=\nabla_X v+\two(X, v),
 \ee
where $\two(X, v)$ is the second fundamental form (see Section \ref{prelim} for details), and  where both sides are evaluated at $p\in M$.  The tangential part is given by $\nabla_Xv$ and the normal part is given by the second fundamental form.  The formula shows how the intrinsic covariant derivative of $v$ in the direction of $X$ is related to the extrinsic covariant derivative.  

Similarly, we can relate the curvatures by
\be\label{GaussC}
\tilde R (W,X,Y,Z)=R(W,X,Y,Z)-\tilde g(\two (W,Z),\two(X,Y))+\tilde g(\two(W,Y), \two(X,Z)),
\ee
where again we evaluate on $M$, $\tilde R, R$ are the Riemann curvature tensors on $\tilde M$ and $M$, respectively, and $W, X, Y, Z$ are vector fields on $M$ extended to a neighborhood in $\tilde M$.

In the case of a hypersurface, $M^n\hookrightarrow \tilde M^{n+1}$, these formulas become
 \begin{align}
 \tn_X v&=\nabla_X v+h(X, v)N,\label{Gaussh}\\
 \tilde R(W,X,Y,Z)&=R(W,X,Y,Z)-h (W,Z)h(X,Y)+h(W,Y)h(X,Z),\label{gaussCH}
 \end{align}
where $h(X, v)$ is the scalar second fundamental form (more in Section 2 below).

In this article, we present a comprehensive collection of formulas for the different types of Laplacians of a vector field on a submanifold $M$  in $\tilde M$, as well as formulas for the Ricci operator. These are fundamental operators on manifolds, but to our knowledge such comprehensive study has not been undertaken before.

An important application of these formulas is in the study of incompressible fluids. We explain it now.  More details can be found in \cite{C24}.  

There is no universal agreement what the incompressible Navier-Stokes equations should be on a Riemannian manifold.  One reason for this is that there is more than one choice of the Laplacian that can act on vector fields on a general Riemannian manifold.  For example, there is the Hodge Laplacian and the Bochner (trace) Laplacian.  There is also a third type of operator,
\be\label{divdef}
2\dv \Def
\ee
where $\Def$ is the deformation tensor, which can be written in coordinates as
\be\label{Defd}
 {(\Def u)}_{ij}=\frac 12 (\nabla_i u_j +\nabla_j u_i),
\ee 
where $\nabla$ is the Levi-Civita connection.

The first article to consider the Navier-Stokes equations on Riemannian manifolds was an article of Ebin and Marsden \cite{EbinMarsden}, and the operator \eqref{divdef} was proposed in that article as the ``correct" operator to use when considering the Navier-Stokes equation on an Einstein
manifold.  This operator is indeed used in the Euclidean case, when the equations are derived using the stress tensor (see for example \cite{Batchelor}). In the Euclidean case, this operator reduces to the (standard) Laplacian in the case of the divergence free vector fields, and so do the other Laplacians: Bochner and Hodge.  This can be observed using the Bochner-Weitzenb\"ock formula \cite{Taylor3} and the Ricci identities \cite{Lee_RG} as follows.  Recall, the Bochner Laplacian is given by
\[
-\dv \nabla=\nabla^\ast\nabla,
\]
and the Hodge Laplacian is
 \begin{align}
-\Delta_h&=\dd\dd^\ast+\dd^\ast\dd,\label{HodgeL}
\end{align}
where $\dd$ is the exterior derivative on differential forms, and $\dd^\ast$ its formal adjoint (also called the divergence).  Then
 \be\label{BW}
 -2\dv\Def u=\nabla^\ast\nabla u+\dd \dd^\ast u-\Ric u=-\Delta_hu+\dd \dd^\ast u-2\Ric u,
 \ee
 where $u$ is a $1$-form, and 
 \[
 \Ric u=\Ric (u^\sharp, \cdot)^\flat,
 \]
 where $\Ric$ is the Ricci tensor, and we use the musical isomorphisms $\sharp, \flat$ to switch between vector fields and $1$-forms \cite{Lee_RG}.

 In \cite{CCM17}, together with Marcelo Disconzi we gave an argument why one would like to use the operator \eqref{divdef}. We referred to the argument as the restriction argument.  The argument can be roughly described as follows: start with a divergence free vector field $\tilde v$ on $\R^3$ that is tangential to the sphere, apply the Euclidean Laplacian to the vector field, restrict  the Laplacian to the sphere, and see what the resulting operator is.  The computation in \cite{CCM17} showed that it was the operator \eqref{divdef}.

Then further work in an article \cite{CCY_22} with Tsuyoshi Yoneda revealed that the situation is more complicated than initially expected.  In \cite{CCY_22} we worked with an ellipsoid.  And while the ellipsoid is no longer an Einstein manifold, which created technical difficulties, a more important point was revealed which is relevant even in the case of the sphere: the operator obtained depends on the  vector field $\tilde v$ on $\R^3$.  There is more than one vector field that can restrict to the same tangent vector field on the submanifold $M$ in question and depending on the vector field $\tilde v$ a different operator can be obtained.  
 
For example, in \cite{CCM17} we worked with a vector field that at the time, appeared to be the most natural way to write a vector field that restricts to a vector field on the sphere. Also, that vector field can be viewed as an extension of the vector field on the sphere to $\R^3$ so its norm grows depending on the distance away from the sphere.  If we work with a vector field that has a norm preserved, then one could obtain the Hodge Laplacian instead!  This was actually already pointed out in \cite{Orszag, Yamada_2018}.  What does this mean?  At this point, we do not know, but at the very least, it means that the argument from \cite{CCM17} is not conclusive, and the problem merits further investigations.
  
The article \cite{CCM17} and \cite{Orszag, Yamada_2018} were all done for the case of the sphere, and considered specific extensions: one type of extension in \cite{CCM17} and two in \cite{Orszag, Yamada_2018}.  The paper \cite{CCY_22} extended the prior work to an ellipsoid, and it produced a formula that worked for any general extension vector field.  In addition, it generalized the work of \cite{CCM17, Orszag, Yamada_2018} as it could also cover the case of the sphere.  

The observation that we make in this article is that the restriction argument described above can be viewed as a special case of the Gauss formula, where we focus on the tangential component. We now keep track of both of the tangential and normal components and consider different type of Laplacians and general ambient manifolds, not just a Euclidean space, as well as any codimension $k\geq 1$. As a result, we obtain the Gauss formulas for Laplacians on submanifolds.  Moreover, we do not need a divergence free condition which was required in \cite{CCY_22} although we write down some additional formulas in the case of the divergence free vector fields, too.  Finally, using the formula for a hypersurface, we are able to extend the Lie derivative formula from \cite{CCY_22} to a surface of revolution.

\subsection{Main Results}
The first main result is the Gauss formula for the Bochner Laplacian, and the second main result is the Gauss formula for the Ricci operator.  Using these two formulas we can obtain as corollaries formulas for the deformation Laplacian and the Hodge Laplacian.  From here, we can obtain the formulas for the projected Laplacians, which was the original motivation for this work.

When the submanifold is a Euclidean submanifold, meaning it is embedded in a Euclidean space, then the formulas are often simpler.  For that reason, and because it might be useful for applications, we record all the formulas separately for the Euclidean submanifolds.

In the case of a hypersurface, when the codimension is one, the formulas can be also simplified from the general case of the higher codimension.  Therefore that case is explicitly stated, too.  In this case, there can be additional simplifications for vector fields that are divergence free, so we highlight these as well.

The third main result is the extension of the formula obtained in \cite{CCY_22} for the ellipsoid  in terms of the Lie derivatives of a $1$-form.  We extend that formula to a surface of revolution in $\R^3$.  The formula follows from the main theorem for the Bochner Laplacian, but its derivation is more complex, which is why it is a theorem, and not simply a corollary.

The presentation is motivated by our point of view that we would like to provide a convenient collection of formulas and to minimize the effort of the reader, and present the formulas as explicitly as possible.  To that end we include equivalent versions of the formulas.  Another benefit of including equivalent versions of the formulas is that different versions can guide us in choosing the viscosity operator we would like to work with; see Section \ref{closerlook} for a detailed discussion.  Finally, different forms of the formulas can be used in applications to boundary value problems.

\subsubsection{Bochner Laplacian formulas}

We begin by stating the Gauss formula for the Bochner Laplacian for a general embedded submanifold $M\hookrightarrow \tilde M$.  It can be stated in two different but equivalent ways.
Then, we specialize to a hypersurface.

\begin{thm}[Gauss formula for the Bochner Laplacian]\label{gaussThmM}
Let $n\geq 2$, $k\geq 1$, $(\tilde M, \tilde g)$ be a Riemannian manifold, $M$ an embedded submanifold, $M\hookrightarrow \tilde M$, $\dim M=n$, $\dim \tilde M=n+k$,
and let $v$ be a vector field on $M$, $p \in M$, and $v$ be an extension of $v$ to a neighborhood of $p$ in $\tilde M$, still denoted by $v$.  Then, evaluating at $p \in M$ we have

\be\label{GaussB1}
\begin{split}
\tilde \nabla^\ast \tilde \nabla v &=\nabla^\ast\nabla v  +\sum_{l=1}^k W_{N_l}W_{N_l}v-nW_{\vH}v\\
&\quad+n[\vH, v]-\sum_{l=1}^k\tn_{N_l}\tn_{N_l}v+\sum_{l=1}^k\tn_{\tn_{N_l}N_l}v,\\
&\qquad -2\delta^{ij}\two(E_i, \nabla_{E_j}v)-\delta^{ij}(\tilde R(E_j,v)E_i)^\perp,
\end{split}
\ee
or equivalently
 
\be\label{GaussB2}
\begin{split}
\tn^\ast \tn v &=\nabla^\ast\nabla v+\sum_{l=1}^k W_{N_l}W_{N_l}v\\
&\quad+n \tn_{\vH}v-\sum_{l=1}^k\tn_{N_l}\tn_{N_l}v+\sum_{l=1}^k\tn_{\tn_{N_l}N_l}v\\
&\qquad -(\tr \nabla^B \two) (v)-2\delta^{ij}\two(E_i, \nabla_{E_j}v).
\end{split}
\ee
 In the case of a hypersurface, $M^n\hookrightarrow \tilde M^{n+1}$, \eqref{GaussB1} reduces to

\be\label{b4projectB}
\begin{split}
\tilde \nabla^\ast \tilde \nabla v &=\nabla^\ast  \nabla v+s^2v+nH\tn_Nv-\tilde \nabla_N\tilde\nabla_N v+\nabla_{\tilde\nabla_N N}v\\
&\qquad -\left(2\sum_i\kappa_i \nabla_i v^i  +nv(H)-h(\tn_NN, v)\right)N+(\widetilde\Ric v)^\perp,
\end{split}
\ee
and \eqref{GaussB2} reduces to
\be\label{b4projectB2}
\begin{split}
\tilde \nabla^\ast \tilde \nabla v &=\nabla^\ast  \nabla v+s^2v+nH\tn_Nv-\tilde \nabla_N\tilde\nabla_N v+\nabla_{\tilde\nabla_N N}v\\
&\qquad -\big((\dv h)(v)+2\sum_i\kappa_i \nabla_i v^i-h(\tn_NN, v)\big) N.
\end{split}
\ee
Here
\begin{itemize}
\item $\tn$ denotes the  Levi-Civita  connection on $\tilde M$,
\item $\nabla$ denotes the Levi-Civita connection on $M$,
\item $\{N_l\}, \ l=1, \dots, k$ is the orthonormal basis for the normal bundle, defined in a neighborhood of $p\in M$,
\item for $k=1,$ $N$ is the choice of the unit normal to $M$, defined in a neighborhood of $p\in M$,
\item for $k\geq 2$, $E_1, \dots, E_n$ is any ON frame for $M$, defined in a neighborhood of $p\in M$, so that $E_1, \dots, E_n, N_1, \dots, N_k$ is an ON frame for $\tilde M$,
\item for $k=1$, $E_1, \dots, E_n$ is a local frame so that $E_i$ are eigenvectors at $p\in M$ associated to the shape operator (see Remark \ref{rem6})
\item $W_{N}$ is the Weingarten map associated to a normal vector field $N$  (see Section \ref{so_setup}),
\item $s$ is the shape operator (see Section \ref{so_setup}),
\item $H$ is the mean curvature of $M$, and $\vH$ is the mean curvature vector  (see Section \ref{Subk}), 
\item $\nabla^B$ is the connection for bilinear maps  (see Section \ref{Subk}), and  $( \tr \nabla^B \two)(v)=(\delta^{ij}\nabla^B_{E_i}\two)(E_j,v)$,
\item $\kappa_i$ are the principal curvatures of $M^n\hookrightarrow \tilde M^{n+1}$,
\item the notation $\nabla_iv^i$ is defined in \eqref{best_notation},
\item $\perp$ denotes the orthogonal projection on the normal space $(T_pM)^\perp$,
\item $\tilde R$ denotes the Riemann curvature on $\tilde M$,
\item $\widetilde \Ric v=(\widetilde \Ric (v,\cdot))^\sharp$, $\widetilde \Ric(\cdot, \cdot)$ is the Ricci tensor on $\tilde M$.
 \end{itemize}

\end{thm}
\begin{remark}
The above formulas do not require $v$ to be divergence free, but see Corollary \ref{corDefdiv}.
\end{remark}
\begin{remark}\label{Rem3}
If $v$ is divergence free both on $M^n$ and $\tilde M^{n+1}$, then the term $nH\tn_Nv$ in \eqref{b4projectB} and \eqref{b4projectB2} is automatically tangential.
\end{remark}

\begin{remark}
We note the above formulas do not explicitly contain the Ricci operator that we would like to see if we are interested in connecting the projected operator with the deformation Laplacian \eqref{divdef}, but see Corollary \ref{cBE}.
\end{remark}
\begin{remark}
The formulas \eqref{GaussB2} and \eqref{b4projectB2} remain the same regardless if the ambient manifold is an abstract manifold or the Euclidean space (see Corollary \ref{cBE}).
\end{remark}
\begin{remark}\label{rem6}
The formulas are indpendent of the choice of the (adapted) ON frame $\{E_i, N_l \}$.  Moreover, this can be said about each individual term that appears in the formulas with the following exceptions.  The expression $\sum_{l=1}^k\tn_{N_l}\tn_{N_l}v-\sum_{l=1}^k\tn_{\tn_{N_l}N_l}v$ needs to be considered as one term in order to be independent of the frame.  In fact, sometimes the terms in the sum are thought of as one operator $$\tn^2_{N_l, N_l}:=\tn\tn v(\cdot, N_l, N_l)=\tn_{N_l}\tn_{N_l}v-\tn_{\tn_{N_l}N_l}v,$$  (see e.g., \cite[p.99]{Lee_RG}).  

The simple presentation of the term $\sum_i \kappa_i \nabla_i v^i$, which can be thought of as weighted divergence, relies on the fact that the frame consists of the eigenvectors of the shape operator.  In general, it can be written as $\delta^{ij}h(E_i, \nabla_{E_i}v)$ (see \eqref{generalF1}) which is easily seen to be independent of the frame. Futhermore, the term $\delta^{ij}h(E_i, \nabla_{E_i}v)N$ is a special case of $\delta^{ij}\two(E_i, \nabla_{E_i}v)$, and it is interesting to observe that we could write it in a generalization of $\sum_i \kappa_i \nabla_i v^iN$ to any frame and any codimension $k\geq 1$ as follows
\[
\sum_{i=1}^n\two(E_i, \nabla_{E_i}v)=-\sum_{l=1}^k\sum_{i,j=1}^n(\Def v)_{ij}c^j_{i(n+l)}N_l,
\]
where $c^\alpha_{\beta \gamma}$ are the coefficients of the Lie brackets of the frame (see \eqref{cbracket}), and $\Def v$ is given by \eqref{Defd}.

Finally, in general, checking independence of the frame can be done by following similar steps as in Lemma \ref{indep} and we leave out the details for other terms.
 \end{remark}

From Theorem \ref{gaussThmM} we have a simple corollary giving a formula for the second fundamental form that we have not seen before.
\begin{cor}\label{corsimple} Let $M^n\hookrightarrow \tilde M^{n+1}$ be an embedded hypersurface and $v\in \mathfrak X(M)$.  Then the scalar second fundamental form $h$ satisfies
\be\label{divh}
(\dv h)(v)=nv(H)-\widetilde \Ric (v,N),
\ee
where $H$ is the mean curvature of $M$.  In general, for $M^n\hookrightarrow \tilde M^{n+k}, k\geq 1$, we have
\be\label{divII}
(\tr \nabla^B \two)(v)=n\nabla^\perp_v\vH+\delta^{ij}(\tilde R(E_i,v)E_j)^\perp,
\ee
where $\{E_i\}$ is any ON frame on $M$.
\end{cor}
\begin{remark}
One consequence of \eqref{divh}, for example, is that divergence of $h$ for Euclidean hypersurfaces is simply $nv(H)$, which further reduces to zero for surfaces with constant mean curvature. 
\end{remark}

\subsubsection{Ricci formula and its application to the Euclidean submanifolds}
Next, we present the Gauss formula for the Ricci operator.  The formula is of its own interest.  In addition, it has multiple applications in this article.  It can be used to extend the above Gauss formulas to the deformation Laplacian and the Hodge Laplacian, as well as to write equivalent formulas for the Bochner Laplacian in the case of the Euclidean submanifolds.
 
\begin{thm}[Gauss Formula for the Ricci Operator]\label{ThmR}
Let $(M^n, g)\hookrightarrow (\tilde M^{n+k}, \tilde g)$, $v$ be a vector field on $M$, $p \in M$, and $v$ be an extension of $v$ to a neighborhood of $p$ in $\tilde M$, still denoted by $v$.  Let $\widetilde \Ric$, $\Ric$ be the Ricci operators obtained by raising an index in the corresponding Ricci tensor, i.e., 
$$\widetilde \Ric v=\widetilde \Ric(v,\cdot)^\sharp,\quad\Ric v= \Ric(v,\cdot)^\sharp.$$ 
Then evaluating at $p\in M$, we have
\be\label{genRicM}
\widetilde \Ric v=\Ric v-nW_\vH v+\sum_{l=1}^k\left(W_{N_l}W_{N_l}v+\delta^{ij}\tilde R(N_l, E_i, v, N_l)E_j\right)+(\widetilde \Ric v)^\perp.
\ee
Hence for $M \hookrightarrow \R^{n+k}$
\be\label{genRicR}
\Ric v=nW_\vH v-\sum_{l=1}^kW_{N_l}W_{N_l}v.
\ee

Then, for an embedded hypersurface, $M^n\hookrightarrow \tilde M^{n+1}$, evaluating on $M$, it holds
\be\label{genRic}
\widetilde \Ric v=\Ric v-nHsv+s^2v+\delta^{ij}\tilde R(N, E_i, v, N)E_j+(\widetilde \Ric v)^\perp,
\ee
and so for a hypersurface $M\hookrightarrow \R^{n+1}$
\be\label{Ricci1}
\Ric v=nHsv-s^2v.
\ee

\end{thm}
\begin{remark}
Formula \eqref{Ricci1} can be found, for example, in \cite[p.35]{KB2}, but to our knowledge, formulas \eqref{genRicM}-\eqref{genRic} are new.
\end{remark}
\begin{remark}
Here, we can see a clear breakdown into the tangential and normal parts: the first four terms in both \eqref{genRicM} and \eqref{genRic} are all tangential while the last term is orthogonal to $M$.  Moreover, by the tensorial properties of $\tilde R$ and $\widetilde \Ric$, all the parts of the formula are independent of the extension of $v$, in fact, only depend on $v|_p, p \in M$.
\end{remark}

In the case of Euclidean submanifolds, we can write the formulas in Theorem \ref{gaussThmM} as follows.
 \begin{cor}[Bochner Laplacian for Euclidean submanifolds]\label{cBE}
 Let $\Ric v=( \Ric (v,\cdot))^\sharp$,  where $\Ric(\cdot, \cdot)$ is the Ricci tensor on $M$.
 With the notation as in Theorem \ref{gaussThmM}, let $M^n\hookrightarrow \R^{n+k}$.  Then, evaluating at $p \in M$ the formula \eqref{GaussB1} becomes
\be\label{GaussB1R}
\begin{split}
\tilde \nabla^\ast \tilde \nabla v &=\nabla^\ast\nabla v  +\sum_{l=1}^k W_{N_l}W_{N_l}v-nW_{\vH}v+n[\vH, v]\\
&\qquad -\sum_{l=1}^k\tn_{N_l}\tn_{N_l}v+\sum_{l=1}^k\tn_{\tn_{N_l}N_l}v-2\delta^{ij}\two(E_i, \nabla_{E_j}v),
\end{split}
\ee
and can be also written as
\be\label{GaussB1R2}
\begin{split}
\tilde \nabla^\ast \tilde \nabla v &=\nabla^\ast\nabla v  -\Ric v+n[\vH, v]-\sum_{l=1}^k\tn_{N_l}\tn_{N_l}v+\sum_{l=1}^k\tn_{\tn_{N_l}N_l}v\\
&\qquad-2\delta^{ij}\two(E_i, \nabla_{E_j}v).
\end{split}
\ee
Formula \eqref{GaussB2} stays the same or it can be written as

 \be\label{GaussB2R}
\begin{split}
\tn^\ast \tn v &=\nabla^\ast\nabla v-\Ric v+nW_{\vH}v+n \tn_{\vH}v-\sum_{l=1}^k\tn_{N_l}\tn_{N_l}v+\sum_{l=1}^k\tn_{\tn_{N_l}N_l}v\\
&\qquad -(\tr \nabla^B \two) (v)-2\delta^{ij}\two(E_i, \nabla_{E_j}v).
\end{split}
\ee
In the case of a hypersurface,  $M\hookrightarrow \R^{n+1}$,  we have directly from \eqref{b4projectB}  
 
\be\label{b4projectRBe}
\begin{split}
\tilde \nabla^\ast \tilde \nabla v &=\nabla^\ast  \nabla v+s^2v+nH\tn_Nv-\tilde \nabla_N\tilde\nabla_N v+\nabla_{\tilde\nabla_N N}v\\
&\qquad -\left(2\sum_i\kappa_i \nabla_i v^i  +nv(H)-h(v,\tn_NN)\right)N,
\end{split}
\ee
and written using $\Ric v$, giving a version of \eqref{GaussB2R},
\be\label{b4projectRBe2}
\begin{split}
\tilde \nabla^\ast \tilde \nabla v &=\nabla^\ast  \nabla v-\Ric v+nH[N,v]-\tilde \nabla_N\tilde\nabla_N v+\nabla_{\tilde\nabla_N N}v\\
&\qquad -\left(2\sum_i\kappa_i \nabla_i v^i  +nv(H)-h(v,\tn_NN)\right)N,
\end{split}
\ee
while \eqref{b4projectB2} remains the same, or with the $\Ric v$ term reading as
\be\label{b4projectRB2}
\begin{split}
\tilde \nabla^\ast \tilde \nabla v &=\nabla^\ast  \nabla v-\Ric v+nH[N,v]-\tilde \nabla_N\tilde\nabla_N v+\nabla_{\tilde\nabla_N N}v\\
&\qquad -\left((\dv h)(v)+2\sum_i\kappa_i \nabla_i v^i -h(v,\tn_NN)\right) N.
\end{split}
\ee
\end{cor}
\begin{remark}
We note that while the formulas \eqref{GaussB1R} and \eqref{b4projectRBe} are obtained by dropping the ambient curvature terms in \eqref{GaussB1} and  \eqref{b4projectB}, respectively, the formulas \eqref{GaussB1R2}, \eqref{GaussB2R}, \eqref{b4projectRBe2} and \eqref{b4projectRB2} rely on the connection of the Ricci operator with the shape operator as written in Theorem \ref{ThmR} (formulas \eqref{genRicR} and \eqref{Ricci1}).  This connection allows for another formulation of the formulas which accentuate different terms.  

Of course, using \eqref{genRicM} and \eqref{genRic}, one could also write more versions of the Gauss formulas for a general ambient manifold $\tilde M$, but this would introduce a term that is neither purely tangential nor normal, the term $\widetilde \Ric v$, so for that reason, we choose not to explicitly write these formulas.
\end{remark}

\begin{remark}
 Similarly as in Remark \ref{Rem3}, if $v$ is divergence free both on $M^n$ and $\R^{n+1}$, then the term $nH\tn_Nv$ in \eqref{b4projectRBe} is tangential.  In addition, in that case, the same holds for $[N,v]$ in \eqref{b4projectRBe2} and \eqref{b4projectRB2}.  
\end{remark}

\begin{remark}
For a hypersurface, we could also replace $nH[N,v]$ with $nH\tn_Nv+sv$.   
\end{remark}

\subsubsection{Deformation Laplacian formulas}

Using the formulas for the Bochner Laplacian and the Ricci operator we can obtain formulas for the deformation Laplacian. 
 
\begin{cor}[Gauss formula for the deformation Laplacian]\label{corDef} With the notation as in Theorem \ref{gaussThmM} let
\begin{align}
\tilde Lv&=-2\widetilde\dv\widetilde\Def v,\quad Lv=-2\dv\Def v.
 \end{align}
Then evaluating on $M^n\hookrightarrow \tilde M^{n+k}$, we have
\be\label{GaussDef}
\begin{split}
\tilde L v  &=L v+n[\vH, v]-\sum_{l=1}^k\tn_{N_l}\tn_{N_l}v+\sum_{l=1}^k\tn_{\tn_{N_l}N_l}v+\mathcal E_1(v)+\mathcal N_1(v),
\end{split}
\ee
or equivalently
\be\label{GaussDef2}
\begin{split}
\tilde L v  &=L v+nW_\vH v+n \tn_{\vH}v-\sum_{l=1}^k\tn_{N_l}\tn_{N_l}v+\sum_{l=1}^k\tn_{\tn_{N_l}N_l}v+\mathcal E_1(v)+\mathcal N_2(v),
\end{split}
\ee
where
\begin{align*}
\mathcal E_1(v)&=- \sum_{l=1}^k\delta^{ij}\tilde R(N_l, E_i, v, N_l)E_j+\nabla \dv^\perp v,\\
\mathcal N_1(v)&=-2\delta^{ij}\two(E_i, \nabla_{E_j}v)-\delta^{ij}(\tilde R(E_j,v)E_i)^\perp-(\widetilde \Ric v)^\perp+\nabla^\perp \widetilde \dv v,\\
\mathcal N_2(v)&=-(\tr \nabla^B \two) (v)-2\delta^{ij}\two(E_i, \nabla_{E_j}v)-(\widetilde \Ric v)^\perp+\nabla^\perp \widetilde \dv v,
\end{align*}
where $\dv^\perp$ and $\nabla^\perp$ are defined in Section \ref{orthdg}.

 In the case of a hypersurface, $M^n\hookrightarrow \tilde M^{n+1}$,
  
\be\label{b4projectBdef}
\begin{split}
\tilde L v&=Lv+nH[N,v]-(\tilde \nabla_N\tilde\nabla_N v)^T+\nabla_{\tilde\nabla_N N}v+\mathcal E_2(v)+\mathcal N_3(v),
\end{split}
\ee
where
\begin{align*}
\mathcal E_2(v)&=-\delta^{ij}\tilde R(N, E_i, v, N)E_j+\nabla\dv^\perp v,\\
\mathcal N_3(v)&=-\Big(2\sum_i\kappa_i \nabla_i v^i  +nv(H)\Big)N+\Big(N (\tn_i v^i)+\tg([N,v], \tn_NN)\Big)N,\\
	&=-\Big(2\sum_i\kappa_i \nabla_i v^i +\dv h(v)\Big) N-(\widetilde \Ric v)^\perp+\Big(N (\tn_i v^i)+\tg([N,v], \tn_NN)\Big)N.
\end{align*}
\end{cor}
\begin{remark}
In the case of a hypersurface, there is a cancellation with the normal component of $\tn_N\tn_Nv$, which is why we are left only with the tangential part.
\end{remark}
It is easy to see what happens with the above formulas for the deformation Laplacian when the ambient manifold is the Euclidean space: they stay exactly the same but we lose any terms involving $\tilde R$ and $\widetilde \Ric$.  For simplicity, we omit writing these explicitly.

It is interesting to see how the formulas simplify in the case when the vector fields are divergence free: both on $\tilde M$ and on $M$.  The formulas are the most interesting in the case of codimension $1$.  For higher codimensions, in case of the ambient manifold being $\R^{n+k}$, due to the Bochner Laplacian and the deformation Laplacian coinciding for divergence free vector fields, the formulas \eqref{GaussB1R2} and \eqref{GaussB2R} can be also derived from the deformation Laplacian formulas \eqref{GaussDef} and \eqref{GaussDef2}, respectively.  We record this below.

\begin{cor}[Gauss formula for the deformation Laplacian: divergence free case]\label{corDefdiv} With the notation as above, suppose $v$ is divergence free both on $M$ and on $\tilde M$.  Then for a general submanifold we have that the form of \eqref{GaussDef} and \eqref{GaussDef2} stays the same with the deformation Laplacian simplifying to just Bochner Laplacian minus Ricci term, and all the last terms in $\mathcal E_1, \mathcal N_1, \mathcal N_2$ are zero.  In particular, for $M\hookrightarrow \R^{n+k}$ we have two equivalent formulas \eqref{GaussB1R2} and \eqref{GaussB2R}.
 
In the case of a hypersurface, $M^n\hookrightarrow \tilde M^{n+1}$, we have
 \be\label{b4projectBdefdiv}
\begin{split}
\tn^\ast\tn v-\widetilde\Ric v&=\nabla^\ast\nabla v-\Ric v+nH[N,v]^T-\tilde \nabla_N\tilde\nabla_N v+\nabla_{\tilde\nabla_N N}v+\mathcal E_2(v)+\mathcal N_4(v),
\end{split}
\ee
where
\begin{align*}
\mathcal E_2(v)&=-\delta^{ij}\tilde R(N, E_i, v, N)E_j,\\
\mathcal N_4(v)&=-\Big(2\sum_i\kappa_i \nabla_i v^i+nv(H) \Big)N+h(\tn_NN, v)N\\
&=-\big(2\sum_i\kappa_i \nabla_i v^i+(\dv h)(v)\big) N-(\widetilde \Ric v)^\perp+h(\tn_NN, v)N.
\end{align*}
Hence for a hypersurface $M^n\hookrightarrow \R^{n+1}$
\be\label{b4projectBdefdivR}
\begin{split}
\tn^\ast\tn v&=\nabla^\ast\nabla v-\Ric v+nH[N,v]^T-\tilde \nabla_N\tilde\nabla_N v+\nabla_{\tilde\nabla_N N}v+\mathcal N_4(v).
\end{split}
\ee
 \end{cor}
 \begin{remark}
 In the case of a Euclidean hypersurface, we can compare the formula \eqref{b4projectBdefdivR} to \eqref{b4projectRBe2} and \eqref{b4projectRB2}. We see the formulas are the same except that the divergence free condition allows us to show the term $nH[N,v]$ is tangential.
 \end{remark}
\subsubsection{Hodge Laplacian formulas}
We also include formulas for the Hodge Laplacian.  
\begin{cor}[Gauss formula for the Hodge Laplacian]\label{corHodgeLap} Let $-\tilde \Delta_h$ denote the Hodge Laplacian on $\tilde M$, and similarly let $-\Delta_h$ denote the Hodge Laplacian on $M$.  Then

\be
\bs
-\tilde \Delta_hv&=- \Delta_h v +2\sum_{l=1}^k W_{N_l}W_{N_l}v-2nW_{\vH}v+\sum_{l=1}^k\delta^{ij}\tilde g(\tilde R(N_l, E_i) v, N_l)E_j\\
&\quad +n[\vH, v]-\sum_{l=1}^k\tn_{N_l}\tn_{N_l}v+\sum_{l=1}^k\tn_{\tn_{N_l}N_l}v\\
&\quad -2\delta^{ij}\two(E_i, \nabla_{E_j}v)-\delta^{ij}(\tilde R(E_j,v)E_i)^\perp+(\widetilde \Ric v)^\perp,
\end{split}
\ee
or equivalently
\be
\bs
-\tilde \Delta_hv&=- \Delta_h v +2\sum_{l=1}^k W_{N_l}W_{N_l}v-nW_{\vH}v+\sum_{l=1}^k\delta^{ij}\tilde g(\tilde R(N_l, E_i) v, N_l)E_j\\
&\quad +n\tn_{\vH}v-\sum_{l=1}^k\tn_{N_l}\tn_{N_l}v+\sum_{l=1}^k\tn_{\tn_{N_l}N_l}v\\
&\quad -(\tr \nabla^B \two) (v)-2\delta^{ij}\two(E_i, \nabla_{E_j}v)+(\widetilde \Ric v)^\perp.
\end{split}
\ee
 In the case of a hypersurface, $M^n\hookrightarrow \tilde M^{n+1}$, we have  
 \be\label{HodgeLapH}
\begin{split}
-\tilde \Delta_hv &= -\Delta_hv+2s^2v+nH(\tn_Nv-sv)+\delta^{ij}\tilde R(N, E_i, v, N)E_j\\
&\qquad -\tilde \nabla_N\tilde\nabla_N v+\nabla_{\tilde\nabla_N N}v+\mathcal N_5(v),
\end{split}
\ee
where
\begin{align*}
\mathcal N_5(v)&=  -\left(2\sum_i\kappa_i \nabla_i v^i  +nv(H)-h(\tn_NN, v)\right)N+2(\widetilde\Ric v)^\perp\\
&=-\big((\dv h)(v)+2\sum_i\kappa_i \nabla_i v^i-h(\tn_NN, v)\big) N+(\widetilde\Ric v)^\perp.
\end{align*}
\end{cor}
 
 \subsubsection{Projected Laplacian formulas}
The original motivation for this work was to explore the formula for the projected Laplacian.  We do this now.

\begin{cor}[Projected Laplacians] \label{corProject}
With the notation as above, for a general submanifold $M^n\hookrightarrow \tilde M^{n+k}$, for the Bochner Laplacian we have
\be\label{GaussB1p}
(\tilde \nabla^\ast \tilde \nabla v)^T =\nabla^\ast\nabla v  +\sum_{l=1}^k W_{N_l}W_{N_l}v+n(\tn_{\vH}v)^T-\sum_{l=1}^k(\tn_{N_l}\tn_{N_l}v- \tn_{\tn_{N_l}N_l}v)^T.
\ee
 In the case of a hypersurface, $M^n\hookrightarrow \tilde M^{n+1}$, we have

\be\label{projectB}
(\tilde \nabla^\ast \tilde \nabla v)^T =\nabla^\ast  \nabla v+s^2v+nH(\tn_Nv)^T-(\tilde \nabla_N\tilde\nabla_N v)^T+\nabla_{\tilde\nabla_N N}v.
\ee

For the deformation Laplacian, and $M^n\hookrightarrow \tilde M^{n+k}$, we have

\be\label{DefprojectM}
\begin{split}
(\tilde L v)^T  &=L v+n[\vH, v]^T-\sum_{l=1}^k(\tn_{N_l}\tn_{N_l}v- \tn_{\tn_{N_l}N_l}v)^T\\
&\qquad- \sum_{l=1}^k\delta^{ij}\tilde R(N_l, E_i, v, N_l)E_j+\nabla \dv^\perp v,
\end{split}
\ee
and for a hypersurface,  $M^n\hookrightarrow \tilde M^{n+1}$,

\be\label{projectBdef2}
(\tilde Lv)^T=Lv+nH[N,v]^T-\tilde \nabla_N\tilde\nabla_N v+\nabla_{\tilde\nabla_N N}v -\delta^{ij}\tilde R(N, E_i, v, N)E_j+\nabla\dv^\perp v.
\ee

For divergence free vector field, with both $\widetilde \dv v=0=\dv v$, for $M^n\hookrightarrow \tilde M^{n+k}$, we have

\be\label{Defprojectdiv}
\begin{split}
(\tilde \nabla^\ast \tilde \nabla v-\widetilde \Ric v)^T  &=\nabla^\ast  \nabla v-\Ric v+n[\vH, v]^T-\sum_{l=1}^k(\tn_{N_l}\tn_{N_l}v- \tn_{\tn_{N_l}N_l}v)^T\\
&\qquad- \sum_{l=1}^k\delta^{ij}\tilde R(N_l, E_i, v, N_l)E_j,
\end{split}
\ee
and for a hypersurface,  $M^n\hookrightarrow \tilde M^{n+1}$, 
  \be\label{projectBdefdivfree}
\begin{split}
(\tn^\ast\tn v-\widetilde\Ric v)^T&=\nabla^\ast\nabla v-\Ric v+nH[N,v]-(\tilde \nabla_N\tilde\nabla_N v)^T+\nabla_{\tilde\nabla_N N}v\\
&\quad-\delta^{ij}\tilde R(N, E_i, v, N)E_j.
\end{split}
\ee 

For the Hodge Laplacian, and  $M^n\hookrightarrow \tilde M^{n+k}$, we have
 
\be
\begin{split}
(-\tilde \Delta_hv )^T&=- \Delta_h v +2\sum_{l=1}^k W_{N_l}W_{N_l}v+n (\tn_{\vH}v)^T-nW_{\vH}v\\
&\qquad -\sum_{l=1}^k(\tn_{N_l}\tn_{N_l}v- \tn_{\tn_{N_l}N_l}v)^T
+ \sum_{l=1}^k\delta^{ij}\tilde R(N_l, E_i, v, N_l)E_j,
\end{split}
\ee
and for a hypersurface,  $M^n\hookrightarrow \tilde M^{n+1}$, 
  \be\label{projectH1}
\begin{split}
(-\tilde \Delta_hv )^T&= -\Delta_hv+2s^2v+nH(\tn_{N}v)^T-nHsv-(\tilde \nabla_N\tilde\nabla_N v)^T+\nabla_{\tilde\nabla_N N}v\\
&\quad+\delta^{ij}\tilde R(N, E_i, v, N)E_j.
\end{split}
\ee 
 \end{cor}

\begin{remark}
In formula \eqref{projectBdef2}, the term $\tn_N\tn_N v$ is already tangential, before the projection, and we leave it with no projection here since it is not necessary (cf. \eqref{b4projectBdef}).  Similarly, in \eqref{projectBdefdivfree}, the term $[N,v]$ is also tangential (cf. \eqref{b4projectBdefdiv}).
\end{remark}
\begin{remark}\label{remb}
By the Weingarten equation \eqref{Weq2}, in \eqref{Defproject} we could equivalently write $n[\vH, v]^T=nW_{\vH}v+n(\tn_{\vH}v)^T.$
\end{remark}
We specialize now to the Euclidean submanifolds.  Since the different Laplacians coincide, we organize the presentation of the formulas according to the codimension instead of the type of the Laplacian.
\begin{cor}[Projected Laplacians: Euclidean submanifolds]\label{corProjectE}
For Euclidean submanifolds, we have the following formulas.
\begin{itemize}
\item For $M^n\hookrightarrow \R^{n+k}$, 
\begin{itemize}
\item for the Bochner Laplacian, the formula \eqref{GaussB1p} stays the same, it reads

\be\label{GaussB1pagain}
(\tilde \nabla^\ast \tilde \nabla v)^T =\nabla^\ast\nabla v  +\sum_{l=1}^k W_{N_l}W_{N_l}v+n(\tn_{\vH}v)^T-\sum_{l=1}^k(\tn_{N_l}\tn_{N_l}v- \tn_{\tn_{N_l}N_l}v)^T,
\ee
and it can be equivalently written as
\be\label{projectBR1}
\begin{split}
(\tilde \nabla^\ast \tilde \nabla v)^T &=\nabla^\ast\nabla v  -\Ric v +n[\vH, v]^T-\sum_{l=1}^k(\tn_{N_l}\tn_{N_l}v- \tn_{\tn_{N_l}N_l}v)^T.
\end{split}
\ee

From the Hodge Laplacian we get another equivalent formula
\be\label{Hodgepk}
\begin{split}
(\tilde \nabla^\ast \tilde \nabla v)^T&=- \Delta_h v +2\sum_{l=1}^k W_{N_l}W_{N_l}v+n (\tn_{\vH}v)^T-nW_{\vH}v \\
&\quad-\sum_{l=1}^k(\tn_{N_l}\tn_{N_l}v- \tn_{\tn_{N_l}N_l}v)^T.
\end{split}
\ee
\item  For the deformation Laplacian, we have

\be\label{Defproject}
\begin{split}
(\tilde \nabla^\ast \tilde \nabla v+\tilde \nabla \widetilde \dv v)^T  &=\nabla^\ast  \nabla v-\Ric v+  \nabla   \dv v +n[\vH, v]^T\\
&\quad -\sum_{l=1}^k(\tn_{N_l}\tn_{N_l}v- \tn_{\tn_{N_l}N_l}v)^T+\nabla \dv^\perp v.
\end{split}
\ee

\end{itemize}
\item For a hypersurface $M^n\hookrightarrow \R^{n+1}$,
\begin{itemize}
\item for the Bochner Laplacian, we have formula \eqref{projectB}, which reads

\be\label{projectBR}
(\tilde \nabla^\ast \tilde \nabla v)^T =\nabla^\ast  \nabla v+s^2v+nH(\tn_Nv)^T-(\tilde \nabla_N\tilde\nabla_N v)^T+\nabla_{\tilde\nabla_N N}v,
\ee
and is equivalent to
\be\label{projectRBe2}
\begin{split}
(\tilde \nabla^\ast \tilde \nabla v)^T& =\nabla^\ast  \nabla v-\Ric v+nH[N,v]^T-(\tilde \nabla_N\tilde\nabla_N v)^T+\nabla_{\tilde\nabla_N N}v.
\end{split}
\ee
From the Hodge Laplacian we get another equivalent formula
  \be\label{projectBdefdiv}
\begin{split}
(\tilde \nabla^\ast \tilde \nabla v)^T= -\Delta_hv+2s^2v+nH(\tn_{N}v)^T-nHsv-(\tilde \nabla_N\tilde\nabla_N v)^T+\nabla_{\tilde\nabla_N N}v.
\end{split}
\ee 

\item For divergence free vector field we have
\be\label{projectBdefdivfreeR2}
\begin{split}
(\tilde \nabla^\ast \tilde \nabla v)^T  &=\nabla^\ast  \nabla v-\Ric v+nH[N,v]-(\tilde \nabla_N\tilde\nabla_N v)^T+\nabla_{\tilde\nabla_N N}v\\
&=\nabla^\ast  \nabla v+s^2v+nH\tn_Nv-(\tilde \nabla_N\tilde\nabla_N v)^T+\nabla_{\tilde\nabla_N N}v\\
&= -\Delta_hv+2s^2v+nH\tn_{N}v-nHsv-(\tilde \nabla_N\tilde\nabla_N v)^T+\nabla_{\tilde\nabla_N N}v.
\end{split}
\ee

\item For the deformation Laplacian we obtain
\be\label{projectBdef2R}
\begin{split}
(\tilde \nabla^\ast \tilde \nabla v+\tilde \nabla \widetilde \dv v)^T  &=\nabla^\ast  \nabla v-\Ric v+ \nabla   \dv v +nH[N,v]^T\\
&\quad-(\tilde \nabla_N\tilde\nabla_N v)^T+\nabla_{\tilde\nabla_N N}v  +\nabla\dv^\perp v.
\end{split}
\ee
\end{itemize}
\end{itemize}
 \end{cor}
\begin{remark}
For the divergence free vector fields, the deformation Laplacian also agrees with the Hodge and Bochner Laplacian; in general, it is a separate formula, as in Corollary \ref{corProject}.  
 In  the case of a hypersurface, in the formula \eqref{projectBdefdivfreeR2} for divergence free vector fields, we get that the bracket  term is already tangential, without the projection, just as in Corollary \ref{corProject}.
\end{remark}
\begin{remark}
Comparing the formulas \eqref{projectBR1} and \eqref{projectRBe2} to  \eqref{BW} we see that \emph{if} the vector field is divergence free on $M$, the deformation tensor Laplacian is contained in the formula obtained from the Gauss formulas, but in general, there will be other terms that depend on the extension of $v$.  

If the vector field is not divergence free, then the connection with the deformation Laplacian and the projected operator is displayed in the formulas \eqref{Defproject} and \eqref{projectBdef2R}.  Here, similarly we also have additional terms that depend on the extension.  

We discuss this more in Section \ref{closerlook} after we review more background on Riemannian submanifolds.  
\end{remark}

%
 
%

 \subsubsection{Lie derivative formula for a surface of revolution}
  
Using Corollary \ref{corProjectE} we can extend the formula in terms of the Lie derivatives of a $1$-form on an embedded ellipsoid as it was obtained in \cite{CCY_22} to a surface of revolution in $\R^3$.  To arrive at the formula we assume that 
 every line through the origin intersects the surface of revolution transversally.  This is assumed so the metric is invertible.
 
We have the following theorem.
\begin{thm}\label{thm2} Let $S$ be a surface of revolution in $\R^3$ such that  every line through the origin intersects S transversally.  Let $v$ be a divergence free vector field on $M$, $p \in M$, and $v$ be an extension of $v$ to a neighborhood of $p$ in $\R^{3}$, still denoted by $v$, and that is divergence free on $\R^3$.  Then

\begin{equation}\label{formulaSR}
\begin{split}
\iota_{S}^* \Big \{ -\triangle v^\flat \Big \} & =   -\triangle_{S}\left( \iota_{S}^* v^\flat\right) -  \iota_{S}^* \left\{\mathcal{L}_N\mathcal{L}_N v^\flat\right\}  + ( \kappa_1-\kappa_2 )\iota_{S}^*\left\{ \mathcal{L}_{N} v^\flat\right\} 
+ \frac 1{\ag^2} \iota_{S}^* \Big \{ \mathcal{L}_{\mathrm{Y}}  v^\flat  \Big\}\\
&\qquad+ 2(\kappa_2-\kappa_1)(\mathcal L_{N} v^\flat)_1 E^1-2\left(\frac{E_1(\ag)}{\ag}\right)^2v_1E^1,
\end{split}
\end{equation}
where
 \begin{itemize}
\item $v^\flat$ is $1$-form obtained by the musical isomorphism,
\item $\mathfrak i^\ast_S$ is the pullback by the inclusion map $\mathfrak i_S: S\hookrightarrow \R^3$,
\item $-\triangle$ is the (Hodge) Laplacian on $\R^3$,
\item  $ -\triangle_{\mathrm{S}}$ is the Hodge Laplacian on $S$,
\item $\mathcal L$ denotes the Lie derivative,
\item $N$ is the choice of the unit normal given by $N=\frac{\nabla\rho}{\ag}$,
\item  $\kappa_1, \kappa_2$ are the principal curvatures of S with respect to $\R^3$,
\item $\rho$ is the defining function as given in \eqref{parametrize},
\item $E^1$ is the $1$-form on $S$ dual to  $E_1$, which is a unit vector field in the direction along the meridians on $S$,
\item $Y= \ag E_1(\abs{\nabla\rho})E_1.$
 \end{itemize}

\end{thm}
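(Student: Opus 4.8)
The plan is to obtain \eqref{formulaSR} as a specialization of the vector-field formula \eqref{project} to $n=2$ and $M=S\hookrightarrow\R^3$, followed by a systematic translation of every covariant-derivative term into a Lie derivative of the $1$-form $v^\flat$. First I would observe that $\R^3$ is flat, so $\Ric_{\R^3}=0$, and hence the Euclidean Bochner Laplacian coincides with the Euclidean Hodge Laplacian through the Weitzenb\"ock identity \eqref{BW}; via the musical isomorphism this identifies $(\tn^\ast\tn v)^\flat$ with $-\triangle v^\flat$, and pulling back by $\iota_S$ (which extracts the tangential part of a $1$-form) matches the left-hand side of \eqref{formulaSR} with the flattened, projected left-hand side of \eqref{project}. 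Applying \eqref{BW} once more, this time intrinsically on $S$, I would replace the pair $\nabla^\ast\nabla v-\Ric v$ by $-\triangle_S(\iota_S^\ast v^\flat)-2\Ric v$; since $S$ is a surface, $\Ric=Kg$ with $K$ the Gaussian curvature, so this step contributes an explicit $-2K v^\flat$ that must eventually be accounted for.

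The core of the argument is the conversion of the three remaining terms of \eqref{project} — the bracket term $nH[N,v]^T$, the second normal derivative $-(\tn_N\tn_N v)^T$, and the lower-order term $(\tn_{\tn_N N}v)^T$ — into Lie derivatives. Here I would use the pointwise comparison $(\mathcal L_X v^\flat)_i = X^j\nabla_j v_i + v_j\nabla_i X^j$ between the Lie and covariant derivatives, applied with $X=N$. Because $[N,v]=\mathcal L_N v$ and $nH=\kappa^1+\kappa^2$ when $n=2$, the bracket term feeds into the $\mathcal L_N v^\flat$ contribution, while the discrepancy between $(\mathcal L_N v)^\flat$ and $\mathcal L_N v^\flat$ — governed by $\mathcal L_N g$, which on a hypersurface is controlled by the shape operator and hence by $\kappa^1,\kappa^2$ — is what converts the symmetric coefficient $\kappa^1+\kappa^2$ into the antisymmetric $\kappa^1-\kappa^2$ appearing in \eqref{formulaSR}. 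Iterating the same comparison turns $-(\tn_N\tn_N v)^T$ into $-\iota_S^\ast\{\mathcal L_N\mathcal L_N v^\flat\}$ modulo first-order corrections built from $\nabla N$.

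Next I would bring in the geometry of the surface of revolution. Adopting the adapted orthonormal frame $E_1$ (along meridians), $E_2$ (along parallels), and $N=\nabla\rho/\ag$, I would record the connection coefficients and use $\tn_{E_i}N=-\kappa^i E_i$ to introduce the principal curvatures. The key geometric computation is $\tn_N N$: since $|N|=1$ it is tangential, and rotational symmetry forces it into the meridian direction, $\tn_N N = c\,E_1$; the transversality hypothesis guarantees that this is well defined (the normal is never radial) and lets me identify $c$ in terms of $E_1(\ag)/\ag$, so that $(\tn_{\tn_N N}v)^T$ becomes exactly the packaged term $\frac1{\ag^2}\iota_S^\ast\{\mathcal L_Y v^\flat\}$ with $Y=\ag\,E_1(\ag)E_1$, once the Cartan identity $\mathcal L_{fX}\omega=f\mathcal L_X\omega+(\iota_X\omega)\,df$ is used to absorb the frame-dependent discrepancy.

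The hard part will be the final bookkeeping. Each passage from $\tn$ or $\nabla$ to $\mathcal L$ spawns correction terms built from the shape operator and from derivatives of $\ag$ along $E_1$, and I expect the Gaussian-curvature term $-2K v^\flat=-2\kappa^1\kappa^2 v^\flat$ produced by the intrinsic Weitzenb\"ock step to cancel against the curvature generated by commuting the two normal derivatives in $\mathcal L_N\mathcal L_N v^\flat$ — a cancellation that should follow from the Gauss equation $K=\kappa^1\kappa^2$ together with the Riccati relation for $N(\kappa^i)$ along the normal. What survives must then collapse to precisely the two anomalous components $2(\kappa^2-\kappa^1)(\mathcal L_N v^\flat)_1 E^1$ and $-2\big(E_1(\ag)/\ag\big)^2 v^\flat_1 E^1$; verifying that the leftover frame-dependent pieces assemble into exactly these $E^1$-terms, using the divergence-free conditions on both $v$ and its $\R^3$-extension to eliminate all other contributions, is the delicate step where the specific structure of the surface of revolution is essential.
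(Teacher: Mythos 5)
Your proposal follows essentially the same route as the paper's proof: specialize \eqref{project} to $n=2$, apply the Bochner--Weitzenb\"ock identity \eqref{BW} both ambiently and intrinsically on $S$, convert $[N,v]^T$, $(\tn_N\tn_N v)^T$, and $(\tn_{\tn_N N}v)^T$ into Lie-derivative terms via the covariant/Lie comparison formula (the paper's Lemma \ref{usefulg}, Lemma \ref{Lies}, and Proposition \ref{doublelie}), and reduce the leftover shape-operator and curvature bookkeeping to identities verified with the unit-speed generating curve (the paper's \eqref{main1}--\eqref{main2}). The only caveat is that this final verification --- which is where the paper's proof spends most of its effort, including the divergence-free facts \eqref{useful_div} and \eqref{Lie3} feeding into Proposition \ref{doublelie} --- is correctly anticipated in your plan (Gauss equation plus Riccati-type relations for $N(\kappa^i)$) but not actually carried out.
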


\begin{remark}
In \cite{CCY_22} we worked with the Hodge Laplacian of a $1$-form in 3D.  Because all the Laplacians are equivalent on the Euclidean space by \eqref{BW}, instead of working with the Hodge Laplacian on the left hand side, we can work with the Bochner Laplacian of a vector field; this will be used in the proof of the formula.  We further show there is no ambiguity in this choice in Section \ref{commuting}.
\end{remark}
\begin{remark}
The existence of vector fields that are divergence free both on a surface and $\R^3$ has been addressed in \cite{CCY_22}.  The discussion was carried out for the case of the ellipsoid, but it extends here as well. The main idea is that such vector fields are as natural for the surface of revolution considered here as they are for the sphere.
\end{remark}
In order to connect formula \eqref{formulaSR} to a formula obtained in \cite{CCY_22}, we have the following corollary.

\begin{cor}\label{corthm2} With the same setting as in Theorem \ref{thm2} we have
\begin{equation}\label{formulaSR2}
\begin{split}
\mathfrak{i}_{\mathrm{S}}^* \Big \{ -\triangle v^\flat \Big \} & =   -\triangle_{\mathrm{S}}\Big ( \mathfrak{i}_{\mathrm{S}}^* v^\flat \Big ) + \mathcal{E} ( v^\flat )
+ \frac 1{\ag^2}\iota_E^* \Big \{ \mathcal{L}_{\mathrm{Y}}  v^\flat   \Big \}
+\frac 2{\ag}(\kappa_2-\kappa_1)\mathfrak{i}_{\mathrm{S}}^* \Big \{ (\mathcal L_{\nabla \rho} v^\flat)_1 \Big \} E^1,
\end{split}
\end{equation}
where
 $\mathcal{E}$ is an operator given by
\[
\mathcal{E} =\iota_E^* \Big \{ -\mathcal{L}_{\nabla \rho} \Big ( \frac{1}{|\nabla \rho|^2}  \mathcal{L}_{\nabla \rho}     \Big ) +\Big( \frac{\kappa_1-\kappa_2}{\ag}-\frac{N(\ag)}{\ag^2}\Big)\mathcal{L}_{\nabla \rho} \Big \} .
 \]
\end{cor}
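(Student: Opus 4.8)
The plan is to begin from formula \eqref{formulaSR} and convert every $\mathcal L_N$ into an $\mathcal L_{\nabla\rho}$ using $N=\nabla\rho/\ag$ together with the scaling rule for the Lie derivative of a $1$-form $\o$,
\[
\mathcal L_{fX}\o=f\,\mathcal L_X\o+(\iota_X\o)\,df,
\]
valid for any function $f$ and field $X$ (it follows from $[fX,Y]=f[X,Y]-Y(f)X$). Two of the four pieces of \eqref{formulaSR} need no work: the term $-\triangle_S(\iota_S^* v^\flat)$ and the term $\frac1{\ag^2}\iota_S^*\{\mathcal L_Y v^\flat\}$ already appear unchanged in \eqref{formulaSR2}. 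For the first-order piece I would use that $v$ is tangent to $S$, so that $\iota_{\nabla\rho}v^\flat=\langle v,\nabla\rho\rangle=\ag\langle v,N\rangle=0$ along $S$; hence the correction $(\iota_{\nabla\rho}v^\flat)\,df$ drops after pullback and $(\mathcal L_N v^\flat)_1|_S=\ag^{-1}(\mathcal L_{\nabla\rho}v^\flat)_1|_S$, which turns $2(\kappa^2-\kappa^1)(\mathcal L_N v^\flat)_1E^1$ into precisely the last term of \eqref{formulaSR2}.

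The substance lies in showing that the three remaining contributions of \eqref{formulaSR}, namely
\[
-\iota_S^*\{\mathcal L_N\mathcal L_N v^\flat\}+(\kappa^1-\kappa^2)\iota_S^*\{\mathcal L_N v^\flat\}-2\Big(\tfrac{E_1(\ag)}{\ag}\Big)^2 v^\flat_1 E^1,
\]
collapse to $\mathcal E(v^\flat)$. I would expand $\mathcal L_N\mathcal L_N v^\flat$ by applying the scaling rule twice and using $\mathcal L_N(df)=d(Nf)$. Writing $a:=\iota_{\nabla\rho}v^\flat$, the two Cartan identities $\iota_{\nabla\rho}\mathcal L_{\nabla\rho}v^\flat=\nabla\rho(a)$ (from $\iota_{\nabla\rho}\iota_{\nabla\rho}=0$) and $\nabla\rho(a)=\ag\,N(a)$ let me package the $df$-type terms. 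After pullback to $S$ every summand still carrying an undifferentiated factor $a$ vanishes by tangency, and a direct check shows that the coefficient of $\mathcal L_{\nabla\rho}v^\flat$ matches the one in $\mathcal E$, because $-N(\ag^{-1})=N(\ag)/\ag^2$ and $\nabla\rho(\ag^{-2})=-2N(\ag)/\ag^2$. The only residue beyond $\mathcal E(v^\flat)$ is the single term $\iota_S^*\{-2N(a)\,df\}$.

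It therefore remains to show that this residue cancels the term $-2(E_1(\ag)/\ag)^2 v^\flat_1E^1$ of \eqref{formulaSR}. On a surface of revolution rotational symmetry gives $E_2(\ag)=0$, so $\iota_S^*(df)=\iota_S^*(d\ag^{-1})=-\ag^{-2}E_1(\ag)\,E^1$, and the cancellation reduces to the pointwise identity
\[
N(a)=E_1(\ag)\,v^\flat_1 \qquad\text{on } S.
\]
This last identity is where I expect the genuine work to be. I would prove it by writing $a=\ag\langle v,N\rangle$ and differentiating: on $S$, $N(a)=\ag\,N\langle v,N\rangle=\ag\big(\langle\tn_N v,N\rangle+\langle v,\tn_N N\rangle\big)$. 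The first summand vanishes: computing $\dv_{\R^3}v$ in the orthonormal frame $\{E_1,E_2,N\}$ gives $\langle\tn_N v,N\rangle+\sum_i\langle\tn_{E_i}v,E_i\rangle$, and by the Gauss formula \eqref{Gaussh} the tangential sum equals $\dv_S v$; since both divergences vanish by hypothesis, $\langle\tn_N v,N\rangle|_S=0$. For the second summand I would evaluate $\tn_N N$ through the Hessian of $\rho$: using $\langle\nabla\rho,N\rangle=\ag$ and $\langle N,\tn_{E_1}N\rangle=0$ one gets $\langle\tn_N N,E_1\rangle=\ag^{-1}\mathrm{Hess}\,\rho(N,E_1)=\ag^{-1}E_1(\ag)$, while $\langle\tn_N N,E_2\rangle=\ag^{-1}E_2(\ag)=0$, so $\tn_N N=\frac{E_1(\ag)}{\ag}E_1$. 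Substituting yields $N(a)=E_1(\ag)v^\flat_1$. The main obstacle is precisely this geometric step: keeping the normal-derivative bookkeeping consistent and recognizing that the two divergence-free hypotheses of Theorem \ref{thm2}, passed through the Gauss formula, are exactly what kills $\langle\tn_N v,N\rangle$ on $S$.
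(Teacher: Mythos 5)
Your proposal is correct, and at the top level it takes the same route as the paper: both obtain \eqref{formulaSR2} by substituting into \eqref{formulaSR} a conversion of $\mathcal L_N$ into $\mathcal L_{\nabla\rho}$. The difference is in what gets proved. The paper's entire proof is the assertion, without derivation, of the two component identities
\begin{align*}
(\mathcal L_N v^\flat)_i&=\tfrac 1 \ag (\mathcal L_{\nabla \rho} v^\flat)_i,\\
(\mathcal L_N \mathcal L_N v^\flat)_i&=\big(\mathcal L_{\nabla \rho}\tfrac 1{\ag^2}\mathcal L_{\nabla \rho}v^\flat\big)_i+\tfrac{N(\ag)}{\ag}(\mathcal L_N v^\flat)_i-2\delta_{i1}\big(\tfrac{E_1(\ag)}{\ag}\big)^2v^1,
\end{align*}
whereas you actually derive them (in equivalent form) by an invariant method: the scaling rule $\mathcal L_{fX}\o=f\mathcal L_X\o+(\iota_X\o)\,\dd f$, naturality of $\dd$ under $\mathcal L$, and the contraction identity $\iota_{\nabla\rho}\mathcal L_{\nabla\rho}v^\flat=\nabla\rho(a)$ for $a=\iota_{\nabla\rho}v^\flat$. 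Your bookkeeping checks out: after pullback, tangency of $v$ (i.e.\ $a\rvert_S=0$) handles the first-order term and kills all undifferentiated factors of $a$, the coefficient of $\mathcal L_{\nabla\rho}v^\flat$ matches $\mathcal E$, and the only residue is $-2N(a)\,\iota_S^*\dd(1/\ag)$; since $E_2(\ag)=0$, its cancellation against $-2(E_1(\ag)/\ag)^2 v^\flat_1E^1$ reduces to $N(a)=E_1(\ag)v^\flat_1$ on $S$, which you prove correctly --- $\tilde g(\tn_Nv,N)\rvert_S=0$ from the two divergence-free hypotheses is exactly the paper's \eqref{useful_div}, and $\tn_NN=\frac{E_1(\ag)}{\ag}E_1$ via symmetry of the Hessian of $\rho$ agrees with the paper's \eqref{tnN} and \eqref{c313}. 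What each version buys: the paper's is compact but opaque, leaving the identities to be verified by the frame/Christoffel machinery of its Section 2; yours is self-contained and makes visible exactly where the hypotheses enter, namely that the quadratic correction term in \eqref{formulaSR} is precisely the residue whose cancellation needs both divergence-free assumptions and the rotational symmetry, while everything else needs only tangency of $v$.
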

\begin{remark}
What we find interesting in the above two formulas is how they accentuate the behavior of the vector field along the $E_1$ direction, which corresponds to the direction along the meridians on the surface of revolution.  This is especially interesting, when compared to the formula \eqref{projectRBe2}, where there is no clear distinction between different directions on the submanifold.
\end{remark}
As examples of the above formulas, we discuss the case of the sphere and the ellipsoid in Section \ref{examples}.

We connect the differential geometry motivation with fluids motivation by the following final remarks. How can directional derivatives be defined on a submanifold?  By taking the extrinsic covariant derivative, and then projecting back to the submanifold (See e.g., \cite[p.86]{Lee_RG}).
This idea of projecting/restricting an extrinsic operator connects the differential geometry with fluids: obtaining formulas such as the ones in this article can lead to obtaining an intrinsic Laplacian-type operator on a submanifold to serve as a viscosity operator for a fluid problem.  At this point, we believe that the exact form of the Laplacian will depend on the physical problem at hand.  The benefit of having different formulas, which while equivalent, emphasize different terms, could be helpful in a study of boundary value problems, where some of those terms are assumed to be zero on the boundary.   We will investigate this further in a forthcoming article, where we will look at applications of these formulas and specific examples of submanifolds. 

The paper is organized as follows.  In Section \ref{prelim} we review the needed background from Riemannian geometry. Then, in Section \ref{closerlook}, we have a more in depth discussion of the Gauss formulas.  Next, we setup the notation and formulas for the surface of revolution result.  In Section 3 we prove the main Gauss formulas, and in Section 4 we show Theorem \ref{thm2} and its corollary.  The last section is devoted to various examples.

\section*{Acknowledgements}
Chi Hin Chan is funded in part by a grant from the Ministry of Science and Technology of Taiwan (109-2115-M-009 -009 -MY2). This work was partially completed while Chi Hin Chan was working as a Center Scientist at the National Center for Theoretical Science of Taiwan R.O.C. 

Magdalena Czubak is funded in part by a grant from the Simons Foundation \# 585745.

\section{Preliminaries}\label{prelim}

Here we establish notation and gather the necessary tools.  We begin with the terminology related to an embedded hypersurface, and then generalize the discussion to embedded submanifolds for any $\codim \geq 1$.  We then move on to a surface of revolution embedded in $\R^3$.   

In general, we sum over repeated indices.   When there might be some confusion, we include a sum.

The following setup is based on \cite[Chapter 8]{Lee_RG}.  Let $(\tM, \tg)$ be a Riemannian manifold, and $(M,g)$ be an embedded Riemannian submanifold with the induced metric $g$, i.e.,
\[
g=\iota^\ast \tg,
\]
where $\iota: M \hookrightarrow \tM$ is the inclusion map.

Let $\mathfrak X(M)$ denote the smooth vector fields on $M$.  Then if $X, v \in \mathfrak X(M)$, $X, v$ can be extended to be vector fields in a neighborhood of $M$ in $\tM$ (e.g., \cite[p. 384, Ex. A.35]{Lee_RG}).  Then the following (Gauss) formula holds on $M$
\be\label{gauss}
\tn_X v=\nabla_X v+\two(X,v),
\ee
where $\two : \mathfrak X(M) \times \mathfrak X(M) \to \Gamma (NM)$ denotes the second fundamental form, which sends two vectors fields on $M$ to smooth sections of the normal bundle of $M$.  See \cite[Chapter 2]{Lee_RG} for more on the normal bundle.

We record some properties of the second fundamental form.
\begin{prop}\cite[Prop. 8.1]{Lee_RG}\label{prop2}.  Let $X, Y \in \mathfrak X(M)$, then the second fundamental form satisfies the following properties.
\begin{itemize}
\item $\two(X,Y)$ is independent of the extensions of $X$ and $Y$.
\item $\two$ is bilinear over smooth functions on $M$.
\item $\two(X,Y)=\two(Y,X)$.
\item The value of $\two(X,Y)$ at $p \in M$ depends only on $X$ and $Y$ at $p$.
\end{itemize}
\end{prop}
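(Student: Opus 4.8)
The plan is to work directly from the definition of the second fundamental form as the normal component of the ambient covariant derivative, $\two(X,Y) = (\tn_X Y)^\perp = \tn_X Y - \nabla_X Y$ coming from the Gauss formula \eqref{gauss}, and to verify the four bulleted properties in turn, always keeping in mind that $X$ and $Y$ are tangent to $M$ along $M$.

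First I would establish independence of the extensions. Since the ambient covariant derivative $\tn_X Y$ evaluated at $p$ is tensorial (pointwise) in its directional argument, it depends only on $X_p$, so the extension of $X$ off $M$ is irrelevant. Because $X_p \in T_pM$, I can compute $\tn_X Y$ at $p$ as the covariant derivative of $Y$ along a curve $\gamma$ that lies in $M$ with $\dot\gamma(0)=X_p$; this value depends only on $Y\circ\gamma$, hence only on the restriction $Y|_M$. As $\nabla_X Y$ is intrinsic to $(M,g)$, the difference $\two(X,Y)$ at $p$ is independent of both extensions.

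Next, for bilinearity over $C^\infty(M)$: the $\R$-bilinearity is immediate from that of $\tn$ and $\nabla$. In the directional slot, tensoriality of $\tn$ gives $\two(fX,Y)=f\,\two(X,Y)$. In the second slot, the Leibniz rule $\tn_X(fY)=(Xf)Y+f\,\tn_X Y$, followed by projection onto the normal bundle, kills the term $(Xf)Y$ because $Y$ is tangent (so $Y^\perp=0$), leaving $\two(X,fY)=f\,\two(X,Y)$. For symmetry, I would write $\two(X,Y)-\two(Y,X)=(\tn_X Y-\tn_Y X)^\perp=([X,Y])^\perp$, using that the ambient Levi-Civita connection is torsion-free; the result then follows from the auxiliary fact that the Lie bracket of two vector fields tangent to $M$ is again tangent to $M$, so its normal part vanishes. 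Finally, the pointwise dependence on $X_p,Y_p$ follows from the $C^\infty(M)$-bilinearity just established via the standard tensor characterization lemma: a map multilinear over $C^\infty(M)$ is induced by a bundle map, so its value at $p$ depends only on the arguments at $p$.

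The main obstacle, and really the only non-formal point, is the auxiliary tangency lemma underlying symmetry: that $[X,Y]$ is tangent to $M$ whenever $X,Y$ are. I would prove this by testing against a smooth function $f$ vanishing on $M$: tangency of $X,Y$ forces $Xf$ and $Yf$ to vanish on $M$, and then $[X,Y]f=X(Yf)-Y(Xf)$ vanishes on $M$ as well, since each term is a tangential derivative of a function vanishing on $M$; this is exactly the statement that $[X,Y]$ is tangent to $M$. Everything else is bookkeeping with the tangential/normal decomposition together with the tensoriality and Leibniz properties of $\tn$.
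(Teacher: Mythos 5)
Your proof is correct, and it fills in something the paper itself never supplies: the paper states this proposition with only a citation to Lee's \emph{Introduction to Riemannian Manifolds} (Prop.\ 8.1) and gives no proof. Your argument --- defining $\two(X,Y)$ as the normal part of $\tn_X Y$, getting extension-independence from tensoriality in $X$ and curve-dependence in $Y$, bilinearity from the Leibniz rule plus the vanishing of the normal part of a tangent field, symmetry from torsion-freeness together with the lemma that the bracket of fields tangent to $M$ is tangent to $M$, and pointwise dependence from the tensor characterization lemma --- is exactly the standard proof given in the cited reference, so there is nothing to correct.
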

\subsection{Hypersurfaces and the shape operator}\label{so_setup}
We now specialize to an embedded hypersurface.  Let $N$ be a choice of a unit normal, $N\in \Gamma(NM)$.  We can then introduce a scalar second fundamental form, given by
\be
h(X,v)=\tilde g(N, \two(X,v)).
\ee
It follows
\be\label{scalar2}
\two(X,v)=h(X,v)N.
\ee
We further define the shape operator
\[
s:  \mathfrak X(M) \to  \mathfrak X(M),
\]
by
\be\label{shape_def}
g(s X, v)=h(X,v).
\ee
The eigenvalues of $s$, $\kappa_i$ are called the principal curvatures, and at each $p\in M$, we have an orthonormal basis $\{E_i\}$ of eigenvectors.  Such basis exists by the self-adjoint property of $s$ \cite[Prop. 8.16]{Lee_RG}.

We write down the Weingarten equation for a hypersurface
\be\label{Weq}
sX=-\tilde \nabla_X N, 
\ee
as well as the Codazzi equation 
\be\label{PMC}
\tilde R(W, X, Y, N)=\nabla h (Y, X, W)-\nabla h (Y, W, X), \quad W, X, Y \in  \mathfrak X(M) .
\ee
Let $p\in M$, and $\{E_i\}$ be an orthonormal basis of the eigenvectors of $s$ for $T_pM$.   We can extend $\{E_i\}$ to be a local ON frame in some neighborhood $U$ of $p$.  We note that in general, we cannot assume that $\{E_i\}$ are eigenvectors of $s$ for all $q\in U$.  This has to do with a possible existence of umbilical points. See for example \cite[pp 123--124]{Clelland_book}.  However, it is sufficient for us that $\{E_i\}$ are eigenvectors of $s$ at $p$.

Next, $\{E_i, N\}$ can be extended locally to mutually orthogonal unit vector fields on $\tilde M$.  Then $\{E_i, N\}$ form a local orthonormal frame for $\tilde M$, sometimes referred to as an adapted frame.  Let $E_{n+1}=N$. If we write $E_\alpha$, we mean $\alpha\in\{1, \dots, n+1\}$, and $E_i$ means $i\in\{1, \dots, n\}$. Same holds for any other Greek or Roman index, respectively.

\subsection{Submanifolds with $\codim \geq 1$}\label{Subk}
We now extend the above discussion to $M^n\hookrightarrow \tilde M^{n+k}$ for any $k\geq 1$.  

In this case,  let $\{N_l\}, \ l=1, \dots, k$, be an orthonormal frame for the normal bundle, defined in a neighborhood of $p\in M$.

For each $N\in \Gamma(NM)$ (not necessarily a unit vector field) we can define a Weingarten map
\[
W_{N}:\Xm{M}\to \Xm{M}
\]
by
\be\label{Wm}
 g(W_N(X),Y)=\tilde g(N, \two(X,Y)).
\ee
The Weingarten equation then becomes
\be\label{Weq2}
W_N X=-(\tilde \nabla_X N)^T,
\ee
where $T$ denotes the projection onto $T_pM$.
We now have a simple lemma.
\begin{lemma}\label{s2L}
Let $N$ be a fixed normal $N$ defined in an open neighborhood in $M$.  If $\{E_i\}$ is any local ON frame on $M$, then
\be\label{s2}
W_{N}W_Nv=\delta^{ij}g(W_Nv, E_j) W_N E_i.
\ee
We also have
\be\label{s2l}
\sum_{l=1}^kW_{N_l}W_{N_l}v=\delta^{ab}\delta^{ij}\tg\big(\two(E_a, E_i),\two(v, E_b)\big) E_j.
\ee

\end{lemma}
\begin{proof}

To see \eqref{s2}, compute using the ON property of the frame, \eqref{Wm} and bilinearity of $\two$ as follows
\begin{align*}
W_{N}W_Nv&=\sum_ig(W_N W_N v, E_i)E_i\\
&=\sum_i\tilde g(N, \two ( W_N v, E_i))E_i\\
&=\sum_{ij}\tilde g\big(N, \two \left( g(W_Nv,E_j)E_j, E_i\right)\big)E_i\\
&=\sum_{ij} g(W_Nv,E_j)\tilde g(N, \two(E_j,E_i))E_i\\
&=\sum_{ij} g(W_Nv,E_j)g(W_N E_j, E_i)E_i\\
&=\sum_{j} g(W_Nv,E_j)W_N E_j,
\end{align*}
as needed.
Next, for \eqref{s2l} consider
\begin{align*}
\delta^{ab}\delta^{ij}\tg\big(\two(E_a, E_i),\two(v, E_b)\big) E_j&=\sum_{l}\delta^{ab}\delta^{ij}\tg\Big(\tg\big(\two(E_a, E_i),N_l\big)N_l,\two(v, E_b)\Big) E_j\\
&=\sum_{l}\delta^{ab}\delta^{ij}\tg\big(\two(E_a, E_i),N_l\big)g(W_{N_l}v, E_b) E_j\\
&=\sum_{l}\delta^{ab}\delta^{ij}g(W_{N_l}E_a, E_i)g(W_{N_l}v, E_b) E_j\\
&=\sum_{l}\delta^{ab}g(W_{N_l}v, E_b)W_{N_l}E_a,
\end{align*}
then \eqref{s2l} follows by \eqref{s2}.
\end{proof}
It is helpful to introduce the normal connection $\nabla^\perp: \Xm{M}\times \Gamma(NM)\to \Gamma(NM)$.  The normal connection is defined by
\be\label{nablap}
\nabla_X^\perp N=(\tn_XN)^\perp.
\ee
We also introduce the mean curvature vector, $\vH$.  In the case of a hypersurface, the scalar mean curvature $H$ is the average of the principal curvatures, the average of the trace of the shape operator.  The mean curvature vector is the generalization obtained by taking the trace of the second fundamental form (see e.g.,  \cite[p.258]{GHL})
\be\label{vH}
\vH=\frac 1n \delta^{ij}\two(E_i,E_j).
\ee 

Finally, we define a connection for bilinear maps over $C^\infty(M)$ taking values in the normal bundle.  More precisely, let $B\to M$ be a vector bundle 
\begin{align}
B= \bigsqcup_{p\in M} B_p,
\end{align}
where
\[
B_p=\{ A: T_pM\times T_p M\to N_p M \ |   \  A \  \mbox{is bilinear} \}.
\]
Then a connection in $B$, $\nabla^B: \mathfrak X(M)\times \Gamma(B)\to \Gamma (B)$ can be defined by
\be\label{nB}
(\nabla^B_X A)(Y,Z)=\nabla^\perp_X(A(Y,Z))-A(\nabla_XY, Z)-A(Y,\nabla_XZ),
\ee
where $A\in \Gamma(B)$ and $X, Y, Z\in  \mathfrak X(M)$.  This connection can be found in \cite[p.231]{Lee_RG}, and it applies to the second fundamental form $\two$.
Using $\nabla^B$ we can state the general form of the Codazzi equation 
\be\label{CodazziG}
(\tilde R(W, X)Y)^\perp=(\nabla^B_W \two)(X,Y)-(\nabla^B_X \two)(W,Y).
\ee
\subsection{Orthogonal divergence and orthogonal gradient}\label{orthdg}
It is natural to also introduce orthogonal divergence and orthogonal gradient as follows.  We first recall the following notation.  Let $(N, h)$ be a Riemannian manifold.  In a local frame $\{F_i\}$ on $N$, we use the notation
\be\label{best_notation}
\nabla_i v^j:=F_i(v^j)+\Gamma_{ik}^j v^k,
\ee
where $\Gamma_{ik}^j$ are the Christoffel symbols defined by
\[
\nabla_{F_i}F_k=\Gamma_{ik}^j F_j.
\]
So then if $X=X^iF_i$, and $Y=Y^jF_j$,
\be\label{obvious}
\nabla_X Y=X^i \nabla_i Y^j F_j.
\ee
Summing over repeated index, we can write then the divergence of $v$ as
\[
\dv v=\nabla_i v^i.
\]
Next, the divergence on $\tilde M$, in an ON frame can be written as
\be\label{diveq}
\widetilde \dv v=\tn_\alpha v^\alpha=\sum_\al\tg(\tn_{E_\al}v, E_\al).
\ee
We then define
\be\label{divperp}
\dv^\perp v=\tn_{n+l}v^{n+l},
\ee
on $M$.  Similarly if $f$ is smooth on $\tilde M$, then the gradient of $f$ can be written as
\[
\tn f=\sum_\al\tg(\tn f, E_\al)E_\al.
\]
Define
\be\label{gradperp}
\nabla^\perp f=\sum_{l=1}^k\tg(\tn f, N_l)N_l.
\ee
We can show that these definitions are independent of the choice of the adapted ON frame.  We also have the following lemma.
\begin{lemma}\label{lemdiv} Let $v$ be a smooth vector field on $\tilde M$.  Then on $M$ we have
\begin{align}
 \widetilde\dv v&=\dv v+\dv^\perp v,\label{divg}\\
\tn \widetilde\dv v&=\nabla \dv v+\nabla \dv^\perp v+\nabla^\perp\widetilde \dv v, \label{graddiv}
\end{align}
and if $v$ is divergence free on $M$ and on $\tilde M$, then $\dv^\perp v=0$ on $M$.
\end{lemma}
\begin{proof}
Formula \eqref{divg} follows from the Gauss formula \eqref{gauss} and \eqref{diveq}.  The fact that $\tn_i v^i=\nabla_i v^i$ could also be seen from computing directly using the Christoffel's symbols.  To see \eqref{graddiv}, use \eqref{divg} and the fact that for a function $f$
\be\label{usea}
E_i (f)=d f(E_i)=\tg (\tn f, E_i)=g(\nabla f, E_i).
\ee
Finally, if $v$ is divergence free on $M$ and on $\tilde M$, from \eqref{divg} we get $\dv^\perp v=0$ as needed.
 
\end{proof}
 
\subsection{Gauss formulas: a closer look}\label{closerlook}
Having introduced the main geometric objects and their relevant properties, we take a closer look at the Gauss formulas.

 We consider the general formulas for the Bochner Laplacian, for all codimension $k\geq 1$, and for the general ambient manifold $\tilde M$.  Analogous comments can be made about other formulas. The formulas for the Bochner Laplacian are the formula \eqref{GaussB1} 
\be\nonumber
\begin{split}
\tilde \nabla^\ast \tilde \nabla v &=\nabla^\ast\nabla v  +\sum_{l=1}^k W_{N_l}W_{N_l}v-nW_{\vH}v\\
&\quad+n[\vH, v]-\sum_{l=1}^k\tn_{N_l}\tn_{N_l}v+\sum_{l=1}^k\tn_{\tn_{N_l}N_l}v,\\
&\qquad -2\delta^{ij}\two(E_i, \nabla_{E_j}v)-\delta^{ij}(\tilde R(E_j,v)E_i)^\perp,
\end{split}
\ee
and \eqref{GaussB2} 
\be\nonumber
\begin{split}
\tn^\ast \tn v &=\nabla^\ast\nabla v+\sum_{l=1}^k W_{N_l}W_{N_l}v\\
&\quad+n \tn_{\vH}v-\sum_{l=1}^k\tn_{N_l}\tn_{N_l}v+\sum_{l=1}^k\tn_{\tn_{N_l}N_l}v\\
&\qquad -(\tr \nabla^B \two) (v)-2\delta^{ij}\two(E_i, \nabla_{E_j}v).
\end{split}
\ee
Both of the formulas are written so on the right hand side, the first line has only tangential terms, the third only normal terms, and the middle line can have both.  More precisely, for the middle line, to distinguish which terms are tangential and normal, we would need to apply the tangential and normal projections.  This is the first difference from the classical Gauss formula for the connection, which we state here again for convenience.
 \be\label{GaussC2}
 \tn_X v=\nabla_X v+\two(X, v).
 \ee

Here, the first term on the right hand side is the intrinsic covariant derivative, which is tangential, and the second fundamental form, which is normal.  Similarly, when we look at the Gauss formula for the curvature, we have for a general submanifold
\be\label{GaussCa}
\tilde R (W,X,Y,Z)=R(W,X,Y,Z)-\tilde g(\two (W,Z),\two(X,Y))+\tilde g(\two(W,Y), \two(X,Z)).
\ee
In this case, the first term on the right is the intrinsic curvature, and the rest involve the second fundamental form. 
 
However, what is important to note is that these classical formulas only consider vector fields that are given as tangential to the submanifold $M$ to begin with.  The issue is even more apparent, when we go back to the formula for the connection and instead of inputting tangential vector field $X$, we consider $\tn_N v$.  Then there is no clear formula to tell us how to break the term into something tangential with a clear meaning; all we can do is to use projections and write
\[
\tn_N v=(\tn_N v)^T+(\tn_N v)^\perp.
\]
Given an ON frame on $\tilde M$, $\{E_\alpha \}$, the Bochner Laplacian of a vector field can be viewed as involving taking two covariant derivatives in the direction of $E_\alpha$ (see Lemma \ref{key}).    For that reason, it should come as no surprise that we have the terms in the middle line in \eqref{GaussB1} and \eqref{GaussB2}.  The second and third term in the middle line are exactly from explicitly considering the covariant derivative in the normal directions.  The very first term arises when we consider covariant derivative in the direction of $\tn_{E_i}E_i$, which by the Gauss formula has tangential and normal components (see the proof of Theorem \ref{gaussThmM} for details).

We now discuss the dependence of the fomulas on the extension of the vector field $v$. In the case of the Gauss formula for the connection, the right hand side is independent of the extension.  Indeed, the normal part, by properties of the second fundamental form (see Proposition \ref{prop2}) depends only on $X, v$ at $p$.  The tangential part $\nabla_X v$ depends only on $X$ at $p$ and only on $v$ along some curve going through $p$, and in the direction of $X$ (see  e.g., \cite[Prop 4.26]{Lee_RG}).

Using this, we could show that purely normal terms in the third line of the formulas depend only on $v$ at $p$, and that the purely tangential terms, the terms in the first line, are independent of the extension to $\tilde M$.  Therefore, the terms in the second line are the ones that can affect both the tangential parts and the normal parts as they do depend on the extension.  We illustrate this further in the examples at the end of the paper.

Next, we would like to demonstrate that there is merit in including all the different formulas.
For the moment, we consider the formulas for a general Euclidean hypersurface $M$.  Looking for the ``correct" operator, we could declare that the correct operator is the one that we obtain by taking only the intrinsic terms that appear on the right hand side.  Given that we have a collection of different formulas, if we choose formula \eqref{projectBR}, then the correct operator would need to be the Bochner Laplacian.  On the other hand, if we choose formula \eqref{projectRBe2}, we would arrive at the deformation Laplacian, and finally, if we worked with \eqref{projectBdefdiv} we would produce the Hodge Laplacian.  Based on this, demanding that the correct operator is only the intrinsic one is not a well defined procedure.

Motivated by Gauss's Theorem Egregium (a combination of extrinsic terms producing an intrinsic ones, see also \eqref{genRicR} and \eqref{Ricci1}), we could relax the requirement, and say that we include the intrinsic terms and the extrinsic ones but that are independent of the extension of $v$.  This gives a unified result, which is
\be\label{intrext}
\nabla^\ast\nabla v+s^2v=\nabla^\ast\nabla v-\Ric v+nHsv,
\ee
and similarly for higher codimensions
\be
\tilde \nabla^\ast \tilde \nabla v =\nabla^\ast\nabla v  +\sum_{l=1}^k W_{N_l}W_{N_l}v=\nabla^\ast\nabla v-\Ric v +nW_{\vH}v.
\ee

It is interesting to point out that in the case of the sphere embedded in $\R^3$, \eqref{intrext} is exactly the Hodge Laplacian (see Example \ref{sphereEx}).

\subsection{Surface of revolution}
Consider the set
\[
R=\{ (r, z): r>0, z\in \R \}\subset \R^2,
\]
and let $C$ be a smooth, embedded $1$-submanifold of $R$.  Then the surface of revolution, denoted by $S$, is an embedded smooth surface in $\R^3$ given by
\[
S=\{ (x, y, z): (\sqrt{x^2+y^2}, z)\in C \},
\]
with $C$ being called the generating curve of $S$ (see e.g., \cite{Lee_RG}).

Now, let $p=(x_0, y_0, z_0)\in S$, $I\subset \R$ open, and let $$\gamma: I\to R$$ be a local parametrization of $C$, $\gamma(t)=(a(t), b(t))$, such that $$\gamma(t_0)=(\sqrt{x_0^2+y_0^2}, z_0)\in C,$$ for some $t_0\in I$.  We also suppose
$\dot \gamma=1$ for all $t\in I$, where $\dot \gamma$ denotes the (time) derivative of $\gamma$.
 
Using $\gamma$, we can now define the following parametrization of a neighborhood of $p\in S$ in $\R^3$
\[
\Phi:  (0,\infty)\times I \times (-\pi,\pi) \to \R^3,
\]
where
\be\label{parametrize}
\Phi(\rho, t, \theta)=(\rho a(t)\cos\theta, \rho a(t)\sin\theta, \rho b(t)).
\ee
When $\rho=1$, we obtain a parametrization of $S$, $\bar \Phi:  I \times (-\pi,\pi) \to \R^3$, $\bar \Phi(\cdot,\cdot)=\Phi(1, \cdot, \cdot).$

The Euclidean metric on $\R^3$ is given by a block diagonal matrix
\be\label{Rmetric}
\left(\begin{array}{ccc}
   g_{\rho \rho}&g_{\rho t}&0\\
   g_{t\rho }&g_{tt}&0\\
 0&0&g_{\theta\theta}\\
   \end{array}\right),
\ee
where
\begin{align}
g_{\rho\rho}=a^2+b^2, \quad g_{\rho t}&=g_{t\rho}=\rho(\dot a a+\dot b b),\label{g1}\\
g_{tt}=\rho^2, \quad g_{\theta\theta}&=\rho^2 a^2.\label{g2}
\end{align}
We need that the curve is parametrized so that
\be\label{cc1}
(b\dot a-a\dot b)^2\neq 0\ \mbox{for all} \ t \in I.
\ee
This will guarantee that the metric as defined above is invertible (see \eqref{RmetricI} below).
\begin{remark}
We can observe that condition \eqref{cc1} is equivalent to  requiring that every line through the origin intersects $S$ transversally.
\end{remark}
Once \eqref{cc1} holds, without loss of generality, we can assume
\[
f(t):=b\dot a-a\dot b>0 \ \mbox{for all} \ t \in I.
\]
The inverse metric of \eqref{Rmetric} is
\be\label{RmetricI}
\left(\begin{array}{ccc}
   g^{\rho \rho}&g^{\rho t}&0\\
   g^{t\rho }&g^{tt}&0\\
 0&0&g^{\theta\theta}\\
   \end{array}\right),
\ee
\begin{align}
g^{\rho\rho}&=\frac{1}{f^2},
 \quad\qquad g^{\rho t}=g^{t \rho}=-\frac{\dot a a+\dot b b}{f^2\rho},\label{grr}\\
g^{tt}&=\frac{a^2 +b^2}{f^2\rho^2}, \quad g^{\theta\theta}=\frac1{a^2\rho^2}.
\end{align}

The metric on $S$ is
\be\label{Smetric}
\left(\begin{array}{cc}
   g_{tt}&0\\
   0&g_{\theta\theta}\\
   \end{array}\right),
\ee
with $g_{tt}, g_{\theta\theta}$ as in \eqref{Rmetric} when $\rho=1$.

\subsection{Dictionary for the Lie derivative formula}\label{dictionary}
Here  we connect the vector field setup with the surface of revolution as introduced in the previous sections.

We will be working in an ON frame consisting of the eigenvectors of the shape operator at $p$ in $S$.  In the case of the surface of revolution, the frame can be defined by
\[
E_1=\frac{\partial_t}{\rho}, \quad E_2=\frac{\partial_\theta}{a\rho},\quad E_3=N={f\nabla \rho},
\]
where $N$ is used to denote the choice of the normal vector, and where we used that $$\ag^2=g(\nabla \rho, \nabla \rho)=g(\dd \rho, \dd \rho)=g^{\rho\rho},$$ and \eqref{grr}.
It is also useful to write down
\be\label{drho}
\partial_\rho=\frac N{\abs{\nabla \rho}}-\rho\frac{g^{\rho t}}{\ag^2}E_1.
\ee
We need Christoffel symbols.  Working in an ON frame, it can be shown (see e.g., \cite[p.124]{Lee_RG})
\be\label{GammaFrame}
\Gamma_{\alpha\beta}^\gamma=\frac 12(c^\gamma_{\alpha\beta}-c^\beta_{\alpha\gamma}-c^\alpha_{\beta\gamma}),
\ee
where $c^{\tilde \gamma}_{\tilde \alpha \tilde \beta}$ are defined through the use of the Lie bracket by
\be\label{cbracket}
[E_{\tilde \alpha}, E_{\tilde \beta}]=c^{\tilde \gamma}_{\tilde \alpha \tilde \beta}E_{\tilde \gamma}.
\ee
From this we can also see that
\be\label{nonsymmetric}
\Gamma_{\alpha\beta}^\gamma=\Gamma_{\beta\alpha}^\gamma+c_{\alpha\beta}^\gamma.
\ee
Computing, using \eqref{drho}, we have
\begin{align*}
[E_1, N]=\left(\frac \ag \rho+\rho \frac{E_1(g^{\rho t})}{\ag}-2\rho \frac{g^{\rho t}}{\ag^2}E_1(\ag)\right)E_1+\frac{ E_1(\abs{\nabla\rho})}{\ag}N.
\end{align*}
Hence
\be\label{c313}
c^2_{13}=0, \quad c^3_{13}=\frac{E_1(\abs{\nabla\rho})}{\ag}=\frac{a\ddot b-b\ddot a}{\rho f},
\ee 
and another, longer computation shows
\be\label{c13}
c^1_{13}=-\frac{\ddot aa+\ddot bb}{\rho f}.
\ee
Similarly,
\begin{align}
[E_2, N]=c^2_{23}E_2=-\frac{\dot b}{a\rho}E_2\label{c223},
\end{align}
and
\begin{align}
[E_1, E_2]=c^2_{12}E_2=-\frac {\dot a}a E_2.\label{c212}
\end{align}
 It follows (we write down only the Christoffel symbols that are needed in this paper)
\begin{align}
\Gamma^1_{31}&=0=\Gamma^2_{31},\quad \Gamma_{31}^3=c^3_{31},\label{c1}\\
\Gamma^1_{32}&=\Gamma^2_{32}=\Gamma_{32}^3=c^3_{32}=0,\nonumber\\
\Gamma^1_{33}&=c^3_{13},\quad\Gamma^2_{33}=0=\Gamma^3_{33},\nonumber\\
\Gamma^1_{11}&= \Gamma_{11}^2=0,\quad \Gamma^3_{11}=c^1_{31},\nonumber\\
\Gamma^1_{12}&= \Gamma_{12}^2=\Gamma^3_{12}=0.\nonumber
\end{align}

This, together with \eqref{nonsymmetric}, implies
\begin{align}
\tn_N E_1&=\Gamma^\alpha_{31}E_\alpha=c_{31}^3N,\quad \tn_N E_2=\Gamma^\alpha_{32}E_\alpha=0,\quad\tn_NN=c^3_{13}E_1,\label{tnN}\\
 \tn_{E_1} N&=\Gamma^\alpha_{13}E_\alpha=c_{13}^1E_1,\quad
\tn_{E_2} N=\Gamma^\alpha_{23}E_\alpha=c_{23}^2E_2,\label{Weq1}
\end{align}
and
\be\label{t11}
\tn_{E_1}E_1=c_{31}^1N,
\ee
\be\label{t12}
\tn_{E_1}E_2=0.
\ee
We note that from \eqref{Weq} and \eqref{Weq1} at $p\in S$ we have
\be\label{kappas}
c_{31}^1=\kappa_1=\frac{\ddot aa+\ddot b b}{f},\quad c_{32}^2=\kappa_2=\frac{\dot b}a,
\ee
where $\kappa_i$ denotes the principal curvature.
 
Next, for our vector field $v=v^\alpha E_\alpha$, if $v$ is tangential to $M$, in this frame we have
\be\label{v0}
v^3=0\ \mbox{at}\ p\in S.
\ee
It follows
\be\label{tnv}
\tn_Nv=N(v^\alpha)E_\alpha+v^1c^3_{31}N\quad\mbox{at}\ \ p\in S,
\ee
and if the vector field is both divergence free on $\R^3$ and $S$ (see below for the verification),
\be\label{divfree}
N(v^3)+v^1\Gamma^3_{31}=N(v^3)+v^1c^3_{31}=0\quad\mbox{at}\ \ p\in S,
\ee
so in particular
\be\label{useful_div}
\tilde g(\tn_N v, N)\rvert_{p\in S}=0.
\ee
We verify \eqref{divfree} now.  If $v$ is divergence free on $\R^3$, then
\[
0=\tn_\alpha v^\alpha=\tn_i v^i+\tn_3 v^3,
\]
and because $v$ is divergence free on $S$,
\[
0=\nabla_i v^i=E_i(v^i)+\Gamma_{ij}^iv^j=E_i(v^i)+\Gamma_{i\al}^iv^\al=\tn_i v^i,
\]
since $v^3=0$ and because $\Gamma_{mn}^l$ are the same on $S$ and $\R^3$ when evaluated at $p\in S$ (see \eqref{GammaFrame}).

\subsection{Lie derivative formulas}
Here we record some useful formulas.  The following statement must be well-known in the literature, but we have not seen the proof, so we give a short proof here for completeness.

\begin{lemma}\label{usefulg}
Let $(\tilde M,\tilde g)$ be a Riemannian manifold, $X, Y\in  \Xm{\tilde M}$ and $\o$ be a $1$-form on $\tilde M$, and let $\ip{\cdot,\cdot}$ denote the duality pairing between a $1$-form and a vector field.  We have
\begin{align}
\ip{\mathcal L_X\o, Y}&=\ip{\tn_X \o, Y}+\ip{\o, \tn_{Y}X}\label{useful1a}\\
&=\tilde g(\tn_X \o^\sharp, Y)+\tilde g(\o^\sharp, \tn_{Y}X)\label{useful2}
\end{align}
where $\tn$ is the Levi-Civita connection on $\tilde M$.
\end{lemma}
\begin{proof} We present a coordinate-independent proof.  From the Cartan formula we have
\[
\mathcal L_X\o=\iota_X\dd \o+ \dd(\iota_X \o),
\]
where $\iota$ is an interior multiplication.

It follows
\begin{align*}
\ip{\mathcal L_X\o, Y}&=\ip{\iota_X\dd \o,Y}+\ip{ \dd(\iota_X \o),Y}\\
&=\dd \o(X,Y)+\ip{\dd(\o(X)),Y}\\
&=\dd \o(X,Y)+Y(\o(X)).
\end{align*}
Next, we use that for a $1$-form $\o$, it holds that (see e.g., \cite[p.369]{Lee_RG}),
\[
\dd \o(X,Y)=X(\o(Y))-Y(\o(X))-\o([X,Y]).
\]
Hence, by definition of the Levi-Civita connection,
\begin{align*}
\ip{\mathcal L_X\o, Y}&=X(\o(Y))-\o([X,Y])\\
&=X\tilde g(\o^\sharp, Y)-\tilde g(\o^\sharp, \tn_XY-\tn_YX),\\
&=\tilde g(\tn_X \o^\sharp, Y)+\tilde g(\o^\sharp, \tn_{Y}X),
\end{align*}
which shows \eqref{useful2}.  Then \eqref{useful1a} follows by definition of the $\sharp$ operator, and the definition of the covariant derivative of a $1$-form. 
\end{proof}
The following corollary gives us formulas for the $\alpha$ component function of the Lie derivative of a form. 
\begin{cor}\label{useful}
Let $(\tilde M,\tilde g)$ be a Riemannian manifold, $X\in  \Xm{\tilde M}$, $\o$ be a $1$-form on $\tilde M$, and $\{E_\alpha\}$ be an ON frame on $\tilde M$. 
Denote 
\[
(\mathcal L_X\o)_\al=\ip{\mathcal L_X\o, E_\alpha}.
\]
We then have
\be\label{useful1}
(\mathcal L_X\o)_\al=g(\tn_X \o^\sharp, E_\al)+g(\o^\sharp, \tn_{E_\al}X),
\ee
as well as
\be\label{relate}
(\mathcal L_X\o)_\al=[X,\o^\sharp]_\al+g(\tn_{\o^\sharp}X, E_\al)+g(\o^\sharp, \tn_{E_\al}X).
\ee
\end{cor}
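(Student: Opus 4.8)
The plan is to derive both identities as direct consequences of Lemma \ref{usefulg}, with essentially no new computation required. For the first formula, I would specialize the test vector field in \eqref{useful2} to a frame vector, taking $Y=E_\al$. By the definition $(\mathcal L_X\o)_\al=\ip{\mathcal L_X\o, E_\al}$, the left-hand side of \eqref{useful2} becomes $(\mathcal L_X\o)_\al$, while the right-hand side becomes $g(\tn_X \o^\sharp, E_\al)+g(\o^\sharp, \tn_{E_\al}X)$, which is precisely the first claimed identity. This is immediate and uses nothing beyond setting $Y=E_\al$ in the lemma.

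For the second formula \eqref{relate}, the key step is to rewrite the term $g(\tn_X \o^\sharp, E_\al)$ using the torsion-freeness of the Levi-Civita connection. Since $\tn$ is symmetric, the Lie bracket satisfies $[X,\o^\sharp]=\tn_X \o^\sharp-\tn_{\o^\sharp}X$, so that $\tn_X \o^\sharp=[X,\o^\sharp]+\tn_{\o^\sharp}X$. Pairing both sides with $E_\al$ in the metric, and using that for the orthonormal frame $\{E_\al\}$ the $\al$-component of a vector field $Z$ is $Z_\al=g(Z,E_\al)$, I obtain $g(\tn_X \o^\sharp, E_\al)=[X,\o^\sharp]_\al+g(\tn_{\o^\sharp}X, E_\al)$. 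Substituting this into the first formula then yields \eqref{relate}.

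There is no serious obstacle here: the proof is a two-line manipulation built on Lemma \ref{usefulg} and the symmetry of $\tn$. The only point requiring care is notational consistency, namely checking that the symbol $(\cdot)_\al$ denotes the same object throughout, the frame component, which for the orthonormal frame $\{E_\al\}$ coincides with the metric pairing against $E_\al$. With that convention fixed, both identities follow at once.
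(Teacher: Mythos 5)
Your proof is correct and follows essentially the same route as the paper: specialize $Y=E_\al$ in Lemma \ref{usefulg} to get \eqref{useful1}, then use torsion-freeness of $\tn$, i.e.\ $[X,\o^\sharp]_\al=g(\tn_X\o^\sharp,E_\al)-g(\tn_{\o^\sharp}X,E_\al)$, to convert \eqref{useful1} into \eqref{relate}. No gaps; nothing further is needed.
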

\begin{remark}
While there is a nice relationship between the covariant derivative of a $1$-form and the covariant derivative of the corresponding vector field, namely
\be\label{special27}
\tn_X\omega =(\tn_X \omega^\sharp)^\flat,
\ee
and in fact \eqref{special27} is used sometimes to define the covariant derivative of a $1$-form, this formula no longer holds for the Lie derivatives, i.e., 
\be
\mathcal L_X \o\neq (\mathcal L_X \o^\sharp)^\flat,
\ee
in general.
This corollary shows they can be related through the use of equation \eqref{relate}.
\end{remark}
\begin{proof}
Equation \eqref{useful1} follows immediately from Lemma \eqref{usefulg}, and \eqref{relate} follows from \eqref{useful1}, and
\[
[X,\o^\sharp]_\al=\tilde g(\tn_X\o^\sharp, E_\al)-\tilde g(\tn_{\o^\sharp}X, E_\alpha).
\]
\end{proof}
A useful consequence of Corollary \ref{useful}, \eqref{useful_div} and \eqref{tnN} that we use in sequel is the following
\be\label{Lie3}
(\mathcal L_{N}v^\flat)_3=g(\tn_{N}v, N)+g(v,\tn_NN)=c^3_{13}v^1 \quad\mbox{at}\ \ p\in S.
\ee

Using Corollary \ref{useful}, we also derive
\begin{lemma}\label{Lies} Let $S$ be an embedded hypersurface in the Euclidean space, $\rho$ a locally defining function, $N=\frac{\nabla \rho}{\ag}$, $v\in \Xm{S}$, and let $s$ denote the shape operator defined in \eqref{shape_def}, Then
\be
((\mathcal L_{N}v^\flat)^\sharp)^T=[N,v]^T-2sv.
\ee
\end{lemma}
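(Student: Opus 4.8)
The plan is to apply the component formula \eqref{relate} of Corollary \ref{useful} directly, taking the ambient vector field to be $X=N$ and the $1$-form to be $\o=v^\flat$, so that $\o^\sharp=v$, and then to read off the tangential part. Since $N=\frac{\nabla\rho}{\ag}$ is defined on a full neighborhood of $p$ in the Euclidean space, the term $\tn_v N$ occurring in \eqref{relate} is meaningful, and I would work throughout in the local ON frame $\{E_\alpha\}=\{E_i,N\}$ of Subsection \ref{so_setup}.

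First I would note that in an ON frame the isomorphism $\sharp$ acts as the identity on components, so that $(\mathcal L_N v^\flat)^\sharp=\sum_\alpha (\mathcal L_N v^\flat)_\alpha E_\alpha$, and the tangential projection $T$ simply discards the $E_{n+1}=N$ component:
\[
((\mathcal L_N v^\flat)^\sharp)^T=\sum_{i=1}^n (\mathcal L_N v^\flat)_i\, E_i.
\]
For each tangential index $i$, equation \eqref{relate} with $X=N$ and $\o^\sharp=v$ yields
\[
(\mathcal L_N v^\flat)_i=[N,v]_i+g(\tn_v N, E_i)+g(v, \tn_{E_i} N).
\]

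Next I would rewrite the last two terms using the Weingarten equation \eqref{Weq}, which gives $\tn_v N=-sv$ and $\tn_{E_i} N=-sE_i$, hence $g(\tn_v N,E_i)=-g(sv,E_i)$ and $g(v,\tn_{E_i}N)=-g(v,sE_i)$. The key simplification is that the shape operator is self-adjoint: by \eqref{shape_def} and the symmetry of $h$ (equivalently of $\two$), $g(v,sE_i)=h(v,E_i)=h(E_i,v)=g(sv,E_i)$, so the two terms coincide and combine to $-2g(sv,E_i)$. Thus $(\mathcal L_N v^\flat)_i=[N,v]_i-2g(sv,E_i)$.

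Finally, summing over $i$ and using that $sv\in T_pM$, so that $\sum_i g(sv,E_i)E_i=sv$ (the shape operator maps $\Xm{S}$ to $\Xm{S}$), I would obtain
\[
((\mathcal L_N v^\flat)^\sharp)^T=\sum_i [N,v]_i E_i-2\sum_i g(sv,E_i)E_i=[N,v]^T-2sv,
\]
which is the claim. The only genuinely delicate point is the bookkeeping for $\sharp$ and the tangential projection in the ON frame, together with the self-adjointness collapse of the two Weingarten contributions into a single factor of $2s$; every other step is a direct substitution into \eqref{relate} and \eqref{Weq}.
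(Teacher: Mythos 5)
Your proof is correct and follows essentially the same route as the paper: both apply equation \eqref{relate} of Corollary \ref{useful} with $X=N$, $\o=v^\flat$, then reduce the two extra terms via the Weingarten equation \eqref{Weq}. The only cosmetic difference is that the paper evaluates $g(\tn_v N,E_i)$ and $g(v,\tn_{E_i}N)$ directly in the principal-direction frame (where $sE_i=\kappa^i E_i$), whereas you invoke the self-adjointness of $s$ to merge them into $-2g(sv,E_i)$; these are the same computation.
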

\begin{proof}
In this proof we implicitly assume we are evaluating at $p\in M$. First,  
\begin{align}
(( \mathcal L_{N}v^\flat)^\sharp)^T= (\mathcal L_{N}v^\flat)_i E_i,
\end{align}
and by \eqref{relate}
\[
(\mathcal L_{N}v^\flat)_i=[N,v]_i+ g(\tn_{v}N, E_i)+ g(v, \tn_{E_i}N).
\]
Next by \eqref{Weq}
\begin{align*}
&\tn_v N=v^\alpha \tn_{E_\al}N=v^j \tn_{E_j}N=-\sum_jv^j\kappa_jE_j,\\
&g(v, \tn_{E_i}N)=-g(v^\alpha E_\al, \kappa_i E_i)=-v^i\kappa_i,
\end{align*} 
and there is no sum in $i$ in the last line.
It follows
\[
(\mathcal L_{N}v^\flat)_i=[N,v]_i-2v^i\kappa_i,
\]
 as needed.
 \end{proof}

Another useful computation, using Corollary \ref{useful}, \eqref{Weq1}-\eqref{t12}, \eqref{v0} and the torsion free property, for $p\in S$, is
\begin{align}
f^2\ip{\mathcal L_{\frac{ c^3_{13}}{f^2}E_1} v^\flat, E_i}&=c^3_{13}g(\tn_{E_1}v, E_i)+f^2g(v, \tn_{E_i}\frac{c^3_{13}}{f^2}E_1)\nonumber\\
&=c^3_{13}E_1(v^i)+f^2\delta_{i1}E_i(c^3_{13}/f^2)v^1+c^3_{13}g(v, \tn_{E_i}E_1)\nonumber\\
&=c^3_{13}E_1(v^i)+f^2\delta_{i1}E_i(c^3_{13}/f^2)v^i+\delta_{i2}c^3_{13}c^2_{21}v^i\label{Liey}.
\end{align}

We finish Section \ref{prelim} with a brief discussion of pullbacks and projections.
\subsection{Projections versus pullbacks}\label{commuting}
The purpose of this section is to show that when working with an ON frame, it is easy to see the relationship of the projection and pullbacks to the appropriate $\flat$ and $\sharp$ operators.  More precisely, let $X\in \Xm{\tilde M}$ be given by
\[
X=X^\alpha E_\alpha,
\]
and now project onto $M$ to obtain
\[
X^{T_M}=X^i E_i.
\]
On the other hand, we can begin with $X^{\flat_{\tilde M}}$, so that
\[
X^{\flat_{\tilde M}}=X_\alpha E^\alpha,\quad X^\alpha=X_\alpha,
\]
and consider
\[
\iota^*_M(X^{\flat_{\tilde M}})=X_i E^i.
\]
Applying $\sharp_M$ gives
\[
(\iota^\ast_M(X^{\flat_{\tilde M}}))^{\sharp_M}=X_i E_i=X^{T_M},
\]
since in an ON frame $X_i=X^i$.

Equivalently
\[
\iota^*_M\circ \flat_{\tilde M}=\flat_M\circ T_M,
\]
which is also equivalent to
\[
\sharp_M\circ \iota^*_M= T_M\circ \sharp_{\tilde M}.
\]
 The following diagram summarizes this.
 \bigskip
 
  \begin{figure}
 \begin{center}
 \tikzcdset{every label/.append style = {font =\small}}
\begin{tikzcd}[row sep=50, column sep=50]
\Lambda^1(\tilde M) \arrow[r, shift left, "\sharp_{\tilde M}"]
\arrow[r, <-, shift right, swap, "\flat_{\tilde M}"]
 \arrow[d, shift left,"\iota^*_{M}"]
& \Xm{\tilde M} \arrow[d, shift left,"T_{M}"] \\
\Lambda^1(M)  \arrow[r, shift left, "\sharp_{M}"]
\arrow[r, <-, shift right, swap, "\flat_{M}"]
&\Xm {M}

\end{tikzcd}
\caption{Vector fields versus forms.}

 \end{center}
 \end{figure}
 \bigskip
 
 This shows that we have freedom to choose if we rather work with a vector field or a form. 
 
 \section{Proofs of the main formulas}
 
 We start with the theorems that directly involve the vector fields and do not use the Lie derivative of a $1$-form.  First we have some preliminary notation and a lemma.
 
Let $(N, h)$ be a Riemannian manifold.
The Bochner Laplacian of a vector field $v$ is given by
\[
 \nabla^\ast \nabla v=-\dv \nabla v,
 \]
  where $\nabla$ is the Levi-Civita connection on $(N, h)$.  The Bochner Laplacian is sometimes called the connection Laplacian or the trace Laplacian.  
  
 We now have the following lemma, where we use \eqref{best_notation}.

\begin{lemma}\label{key}
Let $(N,h)$ be a Riemannian manifold, and $\{F_i\}$ be a local frame on $N$.  Then
\be
\nabla^\ast \nabla v=-h^{ik}\nabla_{F_i}\nabla_{F_k}v+h^{ik}\nabla_{\nabla_{F_i}{F_k}}v.
\ee
\end{lemma}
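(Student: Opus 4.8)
The plan is to identify the connection Laplacian $\nabla^\ast\nabla v=-\dv\nabla v$ with the negative of the metric trace of the \emph{second} covariant derivative of $v$, and then to evaluate that trace in the frame $\{F_i\}$. Recall that $\nabla v$ is the $(1,1)$-tensor field $Y\mapsto\nabla_Y v$, and that its total covariant derivative $\nabla^2 v=\nabla(\nabla v)$ is the $(1,2)$-tensor given by
\[
(\nabla^2 v)(X,Y)=(\nabla_X(\nabla v))(Y)=\nabla_X(\nabla_Y v)-\nabla_{\nabla_X Y}v,
\]
the last equality being the defining property of the covariant derivative of a tensor field applied to $\nabla v$. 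This identity is tensorial, hence valid for the arbitrary frame $\{F_i\}$, which is neither assumed to be a coordinate frame nor orthonormal.

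Next I would express the divergence. Viewing $\nabla v$ as a bundle endomorphism, $\dv(\nabla v)$ is by definition the contraction of $\nabla^2 v$ over its two covariant slots with respect to $h$, so that in the frame
\[
-\dv\nabla v=-h^{ik}(\nabla^2 v)(F_i,F_k)=-h^{ik}\bigl(\nabla_{F_i}\nabla_{F_k}v-\nabla_{\nabla_{F_i}F_k}v\bigr),
\]
which upon distributing the sign is exactly the claimed formula. Concretely, one can also verify this at the level of indices: with the notation \eqref{best_notation} the components of $\nabla^2 v$ are $F_i(\nabla_k v^j)+\Gamma^j_{il}\nabla_k v^l-\Gamma^l_{ik}\nabla_l v^j$, where the last term is precisely $-\nabla_{\nabla_{F_i}F_k}v$ since $\nabla_{F_i}F_k=\Gamma^l_{ik}F_l$; tracing against $h^{ik}$ and negating reproduces the right-hand side term by term.

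The main point to be careful about, and the reason the lemma is not a mere restatement of the definition, is the correction term $+h^{ik}\nabla_{\nabla_{F_i}F_k}v$. In a general frame $\nabla_{F_i}\nabla_{F_k}$ is \emph{not} the second covariant derivative, because $\nabla_{F_i}F_k\neq 0$; only after subtracting $\nabla_{\nabla_{F_i}F_k}v$ does one recover the genuine tensorial object $(\nabla^2 v)(F_i,F_k)$ whose trace is coordinate-independent. Keeping this term is the heart of the argument, and it is exactly what matters for the orthonormal (non-holonomic) frames $\{E_i\}$ used in the sequel, where $\nabla_{E_i}E_k$ generally fails to vanish. As a consistency check, in normal coordinates centered at $p$ one has $\nabla_{F_i}F_k\big|_p=0$, so both sides collapse to the familiar $-h^{ik}\nabla_{F_i}\nabla_{F_k}v$ at $p$.
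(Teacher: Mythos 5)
Your proof is correct, and its headline argument is precisely the alternative route the paper itself flags in the remark following Lemma \ref{key}: identifying $\nabla^\ast\nabla v$ with the negative $h$-trace of the second covariant derivative and invoking the tensorial identity $(\nabla^2 v)(X,Y)=\nabla_X\nabla_Y v-\nabla_{\nabla_X Y}v$ is exactly ``taking the trace of the identity in \cite[Prop.~4.21]{Lee_RG},'' which the authors acknowledge but deliberately avoid in favor of a self-contained computation. The paper's own proof works entirely at the level of the index notation \eqref{best_notation}: it expands $-h^{ik}\nabla_k\nabla_i v^j$ using the coordinate formula for the covariant derivative of a $(1,1)$-tensor, separately expands $-h^{ik}\nabla_{F_k}\nabla_{F_i}v$ by the Leibniz rule, and reads off the discrepancy $h^{ik}\Gamma^\ell_{ki}\nabla_\ell v^j F_j=h^{ik}\nabla_{\nabla_{F_k}F_i}v$. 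Your second paragraph, with the components $F_i(\nabla_k v^j)+\Gamma^j_{il}\nabla_k v^l-\Gamma^l_{ik}\nabla_l v^j$, is that same computation in compressed form, so your write-up in fact contains the paper's proof as a consistency check. What your abstract route buys is brevity and a conceptual explanation of why the correction term $h^{ik}\nabla_{\nabla_{F_i}F_k}v$ must be there (it is what makes the contraction frame-independent, as you emphasize); what the paper's route buys is self-containedness, in that it never needs the reader to accept that $\nabla^2 v$ is the ``right'' tensor, only the Leibniz rule. One small point of care: since $\nabla v$ is a $(1,1)$-tensor there is more than one possible contraction of $\nabla(\nabla v)$, so the phrase ``by definition'' should be read together with your specification that the trace is over the two covariant (direction) slots; with that convention fixed, your argument matches the paper's usage of $-\dv\nabla v$ and the proof is complete.
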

\begin{remark}
This lemma can follow from taking the trace of the identity in \cite[Prop. 4.21]{Lee_RG}.  However, because Lemma \ref{key} is key for our proof, we present a straightforward and self-contained proof below.
\end{remark}
\begin{proof}
By using \eqref{best_notation}, we have
\be\label{bochner1}
\nabla^\ast \nabla v=-\nabla^i \nabla_i v^j F_j.
\ee
This means the coordinate function in \eqref{bochner1} is given by
\begin{align}
-\nabla^i\nabla_i v^j&=-h^{ik}\nabla_k\nabla_i v^j\nonumber\\
&=-h^{ik}(F_k\nabla_i v^j+\Gamma^j_{k\ell}\nabla_i v^\ell-\Gamma^\ell_{ki}\nabla_\ell v^j),\label{bochner}
\end{align}
where we used a formula for a covariant derivative of a $(1,1)$ tensor (see e.g., \cite[Prop. 4.18]{Lee_RG}).

Now, use \eqref{obvious}, the properties of the connection, and consider
\begin{align*}
-h^{ik}\nabla_{F_k}\nabla_{F_i}v&=-h^{ik}\nabla_{F_k}(\nabla_i v^j F_j)\\
&=-h^{ik}(F_k\nabla_i v^j F_j+\nabla_i v^j \Gamma^\ell_{kj}F_\ell)\\
&=-h^{ik}(F_k\nabla_i v^j +\nabla_i v^\ell \Gamma^j_{k\ell})F_j.
\end{align*}
Comparing to \eqref{bochner}, we have
\[
\nabla^\ast\nabla v=-h^{ik}\nabla_{F_k}\nabla_{F_i}v+h^{ik}\Gamma^\ell_{ki}\nabla_\ell v^j F_j,
\]
and we observe that the second term can be rewritten as follows
\begin{align*}
h^{ik}\Gamma^\ell_{ki}\nabla_\ell v^j F_j&=h^{ik}\Gamma^\ell_{ki}\nabla_{F_\ell}v\\
&=h^{ik}\nabla_{\Gamma^\ell_{ki}F_\ell}v\\
&=h^{ik}\nabla_{\nabla_{F_k}F_i}v,
\end{align*}
as needed.
\end{proof}
\subsection{Proof of Theorem \ref{gaussThmM}}\label{proofofshapeop}
We are now ready to start deriving formulas \eqref{GaussB1} and  \eqref{GaussB2}.  Let $v\in \mathfrak X(M)$, $\dim M=n$, and extend it to be locally a vector field on $\tilde M$, $\dim \tilde M=n+k$.  By Lemma \ref{key}, and using the notation introduced in Section \ref{so_setup}, we have
\begin{align*}
\tn^\ast \tn v&=-\delta^{\al\bt}\tn_{E_\al}\tn_{E_\bt}v+\delta^{\al\bt}\tn_{\tn_{E_\al}{E_\bt}}v\\
&=-\delta^{ij}\tn_{E_i}\tn_{E_j}v+\delta^{ij}\tn_{\tn_{E_i}{E_j}}v-\sum_l\tn_{N_l}\tn_{N_l}v+\sum_l\tn_{\tn_{N_l}N_l}v\\
&=I+II-\sum_l\tn_{N_l}\tn_{N_l}v+\sum_l\tn_{\tn_{N_l}N_l}v,
\end{align*}
where we use $\sum_l$ to denote $\sum_{l=1}^k$.

By \eqref{gauss}, it follows
\begin{align*}
I&=-\delta^{ij}\tn_{E_i}\tn_{E_j}v\\
&=-\delta^{ij}\tn_{E_i}(\nabla_{E_j}v+\two(E_j, v))\\
&=Ia+Ib.
\end{align*}
Then, evaluating at $p\in M$, again by \eqref{gauss}  
\be\label{Ia}
Ia=-\delta^{ij}\tn_{E_i}\nabla_{E_j}v=-\delta^{ij}\nabla_{E_i}\nabla_{E_j}v-\delta^{ij}\two(E_i, \nabla_{E_j}v).
\ee

For $Ib$, we first observe that by definition $\two(E_j, v)\in \Gamma (NM)$.  It follows
\[
\two(E_j, v)=\sum_l \tg(N_l, \two(E_j, v)) N_l.
\]
So by definition of the Weingarten map, \eqref{Wm},
\[ 
\two(E_j, v)=\sum_l \tg(W_{N_l}v, E_j) N_l.
\]
Then 
\begin{align}
Ib&=-\delta^{ij}\tn_{E_i}\two(E_j,v)\nonumber\\
&=-\delta^{ij}(\tn_{E_i}\two(E_j,v))^T-\delta^{ij}\nabla^\perp_{E_i}\two(E_j,v)\nonumber\\
&=-\delta^{ij}\sum_l(\tn_{E_i} \tg(W_{N_l}v, E_j) N_l)^T-\delta^{ij}\nabla^\perp_{E_i}\two(E_j,v)\nonumber\\
&=-\delta^{ij}\sum_l\left(E_i   \tg(W_{N_l}v, E_j) N_l  + \tg(W_{N_l}v, E_j)\tn_{E_i} N_l\right)^T-\delta^{ij}\nabla^\perp_{E_i}\two(E_j,v)\nonumber\\
&=-\delta^{ij}\sum_l  \tg(W_{N_l}v, E_j)(\tn_{E_i} N_l)^T-\delta^{ij}\nabla^\perp_{E_i}\two(E_j,v)\nonumber\\
&=\sum_l W_{N_l}W_{N_l}v-\delta^{ij}\nabla^\perp_{E_i}\two(E_j,v),\label{Ib}
\end{align}
by \eqref{Weq2} and \eqref{s2}.

Next
\begin{align*}
II&=\delta^{ij}\tn_{\tn_{E_i}{E_j}}v\\
&=\delta^{ij} \left( \tn_{\nabla_{E_i}{E_j}}v+\tn_{\two(E_i,E_j)}v\right)\\
&=IIa+IIb.
\end{align*}
We have
\be\label{IIa}
IIa=\delta^{ij} \tn_{\nabla_{E_i}{E_j}}v=\delta^{ij} \left( \nabla_{\nabla_{E_i}{E_j}}v+\two(\nabla_{E_i}{E_j}, v)\right),
\ee
and 
\be\label{IIb}
IIb=\delta^{ij} \tn_{\two(E_i,E_j)}v=n\tn_{\vH}v,
\ee
where $\vH$ is the vectorial mean curvature defined in \eqref{vH}.
 
Combining \eqref{Ia}-\eqref{IIb}, we obtain

\begin{align*}
\tn^\ast \tn v &=-\delta^{ij}\nabla_{E_i}\nabla_{E_j}v-\delta^{ij}\two(E_i, \nabla_{E_j}v)+\sum_l W_{N_l}W_{N_l}v-\delta^{ij}\nabla^\perp_{E_i}\two(E_j,v)\\
&\quad+\delta^{ij} \left( \nabla_{\nabla_{E_i}{E_j}}v+\two(\nabla_{E_i}{E_j}, v)\right)+n \tn_{\vH}v\\
&\quad-\sum_l\tn_{N_l}\tn_{N_l}v+\sum_l\tn_{\tn_{N_l}N_l}v.
\end{align*}
Next, we apply Lemma \ref{key} to $(M, g)$. This gives
\begin{align*}
\tn^\ast \tn v &=\nabla^\ast\nabla v-\delta^{ij}\two(E_i, \nabla_{E_j}v) +\sum_l W_{N_l}W_{N_l}v-\delta^{ij}\nabla^\perp_{E_i}\two(E_j,v)\\
&\quad+\delta^{ij} \two(\nabla_{E_i}{E_j}, v)+n \tn_{\vH}v-\sum_l\tn_{N_l}\tn_{N_l}v+\sum_l\tn_{\tn_{N_l}N_l}v.
\end{align*}

We now gather the terms explicitly containing $\two$.  These are
\be\label{explicitTWO}
 -\delta^{ij}\two(E_i, \nabla_{E_j}v) -\delta^{ij}\nabla^\perp_{E_i}\two(E_j,v)
 +\delta^{ij} \two(\nabla_{E_i}{E_j}, v).
\ee

We compare them to $\nabla^B \two$ introduced in Section \ref{prelim}.  More precisely, we consider
\be\label{nablaBs}
(\nabla^B_{E_j}\two)(E_i, v)=\nabla_{E_j}^\perp(\two(E_i, v))-\two(\nabla_{E_j}E_i, v)-\two(E_i, \nabla_{E_j}v).
\ee

It follows, \eqref{explicitTWO} can be written as

\be\label{explicitNa}
-2\delta^{ij}\two(E_i, \nabla_{E_j}v)-\delta^{ij} (\nabla^B_{E_j}\two)(E_i, v).
\ee
We can think of the second term as the trace of $\nabla^B \two$ in the first and third component and write
\be\label{trB}
\delta^{ij} (\nabla^B_{E_j}\two)(E_i, v)=(\tr \nabla^B \two) (v).
\ee

Putting it all together we have
\be\label{im1}
\begin{split}
\tn^\ast \tn v &=\nabla^\ast\nabla v-2\delta^{ij}\two(E_i, \nabla_{E_j}v) +\sum_l W_{N_l}W_{N_l}v-(\tr \nabla^B \two) (v)\\
&\quad +n \tn_{\vH}v-\sum_l\tn_{N_l}\tn_{N_l}v+\sum_l\tn_{\tn_{N_l}N_l}v.
\end{split}
\ee
This is \eqref{GaussB2}.

Equivalently, we can use that since $\two$ is symmetric, then so is $\nabla^B_{E_j} \two$, and
\[
(\nabla_{E_j}^B\two)(E_i, v)=(\nabla_{E_j}^B\two)(v, E_i)=(\tilde R(E_j,v)E_i)^\perp+(\nabla^B_{v}\two)(E_j, E_i),
\]
where the last equality holds by the Codazzi equation \eqref{CodazziG}. Hence, \eqref{im1} can be also written as
\be\label{im2}
\begin{split}
\tn^\ast \tn v &=\nabla^\ast\nabla v-2\delta^{ij}\two(E_i, \nabla_{E_j}v) +\sum_l W_{N_l}W_{N_l}v-\delta^{ij}(\tilde R(E_j,v)E_i)^\perp-(\nabla^B_{v}\two)(E_j, E_i)\\
&\quad +n \tn_{\vH}v-\sum_l\tn_{N_l}\tn_{N_l}v+\sum_l\tn_{\tn_{N_l}N_l}v.
\end{split}
\ee
We can continue further by explicitly writing out $(\nabla^B_{v}\two)(E_j, E_i)$.
\begin{align*}
\tn^\ast \tn v &=\nabla^\ast\nabla v-2\delta^{ij}\two(E_i, \nabla_{E_j}v) +\sum_l W_{N_l}W_{N_l}v-\delta^{ij}(\tilde R(E_j,v)E_i)^\perp\\
&\quad -\delta^{ij}\nabla_{v}^\perp(\two(E_i, E_j))+2\delta^{ij}\two(\nabla_{v}E_i, E_j)\\
&\qquad +n \tn_{\vH}v-\sum_l\tn_{N_l}\tn_{N_l}v+\sum_l\tn_{\tn_{N_l}N_l}v\\
 &=\nabla^\ast\nabla v+2\delta^{ij}\two(E_i, [v, E_j]) +\sum_l W_{N_l}W_{N_l}v-\delta^{ij}(\tilde R(E_j,v)E_i)^\perp\\
&\quad -n\nabla_{v}^\perp \vH+n \tn_{\vH}v-\sum_l\tn_{N_l}\tn_{N_l}v+\sum_l\tn_{\tn_{N_l}N_l}v\\
 &=\nabla^\ast\nabla v+2\delta^{ij}\two(E_i, [v, E_j]) +\sum_l W_{N_l}W_{N_l}v-\delta^{ij}(\tilde R(E_j,v)E_i)^\perp\\
&\quad -n\tn_{v}\vH+n(\tn_v\vH)^T+n \tn_{\vH}v-\sum_l\tn_{N_l}\tn_{N_l}v+\sum_l\tn_{\tn_{N_l}N_l}v\\
&=\nabla^\ast\nabla v+2\delta^{ij}\two(E_i, [v, E_j]) +\sum_l W_{N_l}W_{N_l}v-\delta^{ij}(\tilde R(E_j,v)E_i)^\perp\\
&\quad -n[v,\vH]-nW_{\vH}v-\sum_l\tn_{N_l}\tn_{N_l}v+\sum_l\tn_{\tn_{N_l}N_l}v.
\end{align*}
This gives \eqref{GaussB1} after we show that one of the above terms can be simplified.  Namely
\[
2\delta^{ij}\two(E_i, [v, E_j])=-2\delta^{ij}\two(E_i, \nabla_{E_j}v),
\]
because it can be actually shown that
\be\label{surprise}
\delta^{ij}\two(E_i, \nabla_{v}E_j)=0.
\ee
This is easily seen in codimension one, using the frame of the eigenvectors at $p$.  The proof, in general, can proceed as follows.  By definition of $\two$
\begin{align*}
\two(\nabla_{v}E_i, E_i)&=(\tn_{\nabla_{v}E_i}E_i)^\perp\\
&=(v^j\tn_{\nabla_{E_j}E_i}E_i)^\perp\\
&=v^j\Gamma_{ji}^a\sum_l \Gamma_{ai}^{n+l}N_l.
\end{align*}
Next, for fixed $j$ and $l$, and summing with respect to $a$ and $i$, we have
\begin{align*}
4\Gamma_{ji}^a \Gamma_{ai}^{n+l}&=(c_{ji}^a-c_{ja}^i-c_{ia}^j)(c_{ai}^{n+l}-c_{a(n+l)}^i-c_{i(n+l)}^a)\\
&=-(c_{ji}^a-c_{ja}^i-c_{ia}^j)(c_{a(n+l)}^i+c_{i(n+l)}^a).
\end{align*}
where we used that the Lie bracket of tangential vector fields remains tangential, so it must be that $c_{ai}^{n+l}=0$.  Then splitting into two sums: $-(c_{ji}^a-c_{ja}^i-c_{ia}^j)c_{a(n+l)}^i$ and $-(c_{ji}^a-c_{ja}^i-c_{ia}^j)c_{i(n+l)}^a$, using that $c^j_{ia}=c^j_{ai}$, switching the roles of $a$ and $i$, say in the second sum, and then summing with respect to $a$ and $i$ we get zero as needed.

\subsubsection{Reducing to a hypersuface}
We now show how, in the case of a hypersurface, \eqref{GaussB1} reduces to \eqref{b4projectB}, and \eqref{GaussB2} reduces to \eqref{b4projectB2}.  For convenience we recall  \eqref{b4projectB} 

\be\label{b4projectBa}
\begin{split}
\tilde \nabla^\ast \tilde \nabla v &=\nabla^\ast  \nabla v+s^2v+nH\tn_Nv-\tilde \nabla_N\tilde\nabla_N v+\nabla_{\tilde\nabla_N N}v\\
&\qquad -\Big(\sum_i2\kappa_i \nabla_i v^i  +nv(H)\Big)N+h(\tn_NN, v)N+(\widetilde\Ric v)^\perp.
\end{split}
\ee

First, in the case of a hypersurface, \eqref{GaussB1} can be immediately written as
\begin{align*}
\tn^\ast \tn v&=\nabla^\ast\nabla v+s^2v-nW_{\vH}v +n[\vH, v]-\tn_{N}\tn_{N}v+\tn_{\tn_{N}N}v\\
&\quad-2\delta^{ij}h(E_i, \nabla_{E_j}v)N -\delta^{ij}(\tilde R(E_j,v)E_i)^\perp
\end{align*}
We now make the following observations.  By the Weingarten equation \eqref{Weq2}, and $\nabla^\perp_vN=0$ and $\vH=HN$ for a hypersurface, it follows
\[
-W_{\vH}v+[\vH,v]=(\tn_v\vH)^T+H[N,v]-v(H)N=H\tn_vN+H[N,v]-v(H)N=H\tn_Nv-v(H)N.
\]

 By definition of the shape operator, we have
\begin{align}\label{generalF1}
\delta^{ij}h(E_i, \nabla_{E_j}v)=\delta^{ij}g(sE_i,\nabla_{E_j}v)=\sum_{i=1}^n\kappa_i\nabla_i v^i.
\end{align}
Moreover, by symmetries of the Riemann curvature tensor and definition of the Ricci tensor
\be\label{usefullater2}
\begin{split}
\delta^{ij}(\tilde R(E_j,v)E_i)^\perp&=\delta^{ij}\tg(\tilde R(E_j,v)E_i, N)N\\
&=\delta^{ij}\tilde R(E_j, v, E_i, N)N+\tilde R(N,v,N,N)N\\
&=-(\widetilde \Ric v)^\perp.
\end{split}
\ee

Finally, \eqref{b4projectBa} follows by the Gauss formula \eqref{gauss} since for a hypersurface, $\nabla^\perp_NN=0$.

Next, for convenience, here is \eqref{b4projectB2}

\be\label{b4projectB2a}
\begin{split}
\tilde \nabla^\ast \tilde \nabla v &=\nabla^\ast  \nabla v+s^2v+nH\tn_Nv-\tilde \nabla_N\tilde\nabla_N v+\nabla_{\tilde\nabla_N N}v\\
&\qquad -\left((\dv h)(v)+2\sum_i\kappa_i \nabla_i v^i-h(\tn_NN, v)\right) N.
\end{split}
\ee

Then for a hypersurface, \eqref{GaussB2} becomes
\be\nonumber
\begin{split}
\tn^\ast \tn v &=\nabla^\ast\nabla v+s^2v +nH\tn_{N}v-\tilde \nabla_N\tilde\nabla_N v+\nabla_{\tilde\nabla_N N}v\\
&\qquad -(\tr \nabla^B \two) (v)-2\delta^{ij}h(E_i, \nabla_{E_j}v)N+h(\tn_NN,v)N.
\end{split}
\ee
We next have from \eqref{trB} and \eqref{nablaBs}
\begin{align*}
(\tr \nabla^B \two) (v)&=\delta^{ij} (\nabla^B_{E_j}\two)(E_i, v)\\
&=\delta^{ij} ({E_j}h(E_i, v)-h(\nabla_{E_j}E_i, v)-h(E_i, \nabla_{E_j}v))N\\
&=\delta^{ij}\nabla h(E_i, v, E_j)N\\
&=(\dv h)(v)N.
\end{align*}
These equations and \eqref{generalF1} imply \eqref{b4projectB2a} as needed.

\subsection{Proof of Corollary \ref{corsimple}} 
Equation \eqref{divh} follows from comparing the normal components of \eqref{b4projectB} and \eqref{b4projectB2} and $(\widetilde \Ric v)^\perp=\widetilde \Ric(v,N)N$.  We can also show this directly since using \eqref{PMC}, \eqref{usefullater2}, and the definition of a covariant derivative of a $2$-tensor, we have
\[
(\dv h)(v)=\sum_i\nabla h(E_i, v, E_i)=-\widetilde \Ric(v,N)+\sum_i \left(nv(\kappa_i)-2h(\nabla_v E_i, E_i) \right),
\]
and the last term is zero by the compatibility with the metric and using the frame consisting of the eigenvectors of $s$ at $p$.  Alternately, \eqref{divh} could be also obtained from formula \eqref{divII} for general codimensions.  To see equation \eqref{divII} we use the symmetry of $\two$ and general Codazzi equation \eqref{CodazziG} as follows.
\begin{align*}
(\tr \nabla^B\two)(v)=\delta^{ij}(\nabla_{E_j}^B\two)(E_i,v)
&=\delta^{ij}(\nabla_{E_j}^B\two)(v,E_i)\\
&=\delta^{ij}\left((\nabla_{v}^B\two)(E_j,E_i)+(\tilde R(E_j,v)E_i)^\perp\right)\\
&=\delta^{ij}\left((\nabla_{v}^B\two)(E_j,E_i)+(\tilde R(E_j,v)E_i)^\perp\right)\\
&=\delta^{ij}\left(\nabla_{v}^\perp\two(E_j,E_i)-2\two(\nabla_vE_j, E_i)+(\tilde R(E_j,v)E_i)^\perp\right),
\end{align*}
and the result follows by \eqref{surprise}.
\subsection{Proof of Theorem \ref{ThmR}}

By definition, and using $v^{n+l}(p)=0$ for $1\leq l \leq k$,  $p\in M$,
\be\label{deriveRic}
\widetilde \Ric v=\widetilde \Ric (v,\cdot)^\sharp=v^i\tilde R_{i\alpha}E_\alpha=v^i\tilde R_{ij}E_j+\sum_lv^i\tilde R_{i(n+l)}N_l.
\ee
Next
\be\label{convention}
\tilde R_{ij}=\delta^{\alpha \beta}\tilde R_{\alpha ij \beta}=\sum_l\tilde R_{(n+l) ij (n+l)}+\delta^{ab}\tilde R_{a ij b},
\ee
and from \eqref{GaussC}, \eqref{vH}, \eqref{Wm}
\begin{align*}
\delta^{ab}\tilde R_{a ij b}&=\delta^{ab}(R_{aijb}-\tilde g(\two(E_a, E_b),\two(E_i,E_j))+\tilde g(\two(E_a, E_j), \two(E_i, E_b)) )\\
&=R_{ij}-n\tg(\vH, \two(E_i, E_j))+\delta^{ab}\tilde g(\two(E_a, E_j), \two(E_i, E_b)) \\
&=R_{ij}-ng(W_{\vH}E_i,  E_j)+\delta^{ab}\tilde g(\two(E_a, E_j), \two(E_i, E_b)) .
\end{align*}

Plugging into \eqref{deriveRic} and using Lemma \ref{s2L} we have
\begin{align}
\widetilde \Ric v=\sum_lv^i \tilde R_{(n+l) ij (n+l)}E_j+\Ric v-nW_\vH v+\sum_l W_{N_l}W_{N_l}v+\sum_lv^i\tilde R_{i(n+l)}N_l.
\end{align}
Observe
\[
\sum_lv^i\tilde R_{i(n+l)}N_l=(\widetilde \Ric v)^\perp,
\]
as well as
\begin{align*}
\sum_l v^i\tilde R_{(n+l) ij (n+l)}E_j&=\sum_l \delta^{ij}\tilde R(N_l, v, E_i, N_l)E_j\nonumber\\
&=\sum_l\delta^{ij}\tilde R(N_l, E_i, v, N_l)E_j.
\end{align*}
This gives \eqref{genRicM} as needed.  The formula for the Euclidean submanifold, \eqref{genRicR}, follows then immediately by using that  $\tilde R$ and $\widetilde \Ric$ are identically zero on $\R^{n+k}$.  In the case of a hypersurface, \eqref{genRicM} implies \eqref{genRic} since
\[
-nW_\vH v+\sum_l W_{N_l}W_{N_l}v=n(\tn_v(HN))^T+s^2v=-nHsv+s^2v.
\]
This and \eqref{genRicR} imply \eqref{Ricci1}.  The proof is completed.

We now show that a curvature term appearing in \eqref{genRic}  is invariant under a change of ON basis.
\begin{lemma}\label{indep}
The term $\sum_{l=1}^k \delta^{ij}\tilde R(N_l, E_i, v, N_l)E_j$ is invariant under a change of adapted ON basis.
\end{lemma}
\begin{proof}
 Let $F_i$ be another ON basis for $T_pM$, and $\tilde N_l$ be another ON basis for $N_pM$.  We can write
 $$E_i=A^j_iF_j,\quad N_l=\sum_{m=1}^kB^m_l\tilde N_m$$
  where $(A^j_i)$ is an orthogonal $n\times n$ change of basis matrix, and $(B_l^m)$ is an orthogonal $k\times k$ change of basis matrix.  Then by the tensorial properties of $\tilde R$ and $(A^j_i), (B_l^m)$ being orthogonal matrices, we have
\begin{align*}
\sum_{l=1}^k\sum_{i=1}^n\tilde R(N_l, E_i, v, N_l)E_i&=\sum_{l=1}^k\sum_{i=1}^n\tilde R(B^m_l\tilde N_m, A^j_iF_j ,v, B^{m'}_l\tilde N_{m'} )A^{j'}_iF_{j'}\\
&=\sum_{l=1}^k\sum_{i=1}^n\tilde R(\tilde N_m, F_j ,v, \tilde N_{m'} )B^m_lB^{m'}_l A^j_iA^{j'}_iF_{j'}\\
&=\sum_{m=1}^k\sum_{j=1}^n\tilde R(\tilde N_m, F_j ,v, \tilde N_{m} )F_{j},
\end{align*}
as needed.
\end{proof}

\subsection{Proof of Corollary \ref{cBE}}
The formulas \eqref{GaussB1R} and \eqref{b4projectRBe} follow directly from \eqref{GaussB1} and \eqref{b4projectB}, respectively, since the ambient curvature terms in this case are zero.  The remaining formulas follow from \eqref{genRicR} and \eqref{Ricci1}.

\subsection{Proof of Corollary \ref{corDef}}
Here, for a general submanifold of codimension $k\geq1$, the formulas follow directly from using that 
\[
\tilde Lv=-2\widetilde \dv\widetilde \Def v=\tn^\ast \tn v-\widetilde \Ric v+\tilde\nabla \widetilde\dv,
\]
and similarly for $Lv$, and using the previously derived formulas for the Bochner Laplacian, $\widetilde \Ric$, as well as Lemma \ref{lemdiv}.

For a hypersurface, we first put together \eqref{b4projectB}, \eqref{genRic} and \eqref{graddiv}.  This gives
\be\label{org1}
\begin{split}
\tilde Lv &=Lv+nHsv+nH\tn_Nv-\tilde \nabla_N\tilde\nabla_N v+\nabla_{\tilde\nabla_N N}v+\mathcal E_2(v)\\
&\qquad -\left(2\sum_i\kappa_i \nabla_i v^i  +nv(H)-h(v, \tn_NN)\right)N+\nabla^\perp\widetilde \dv v,
\end{split}
\ee
with 
\[
\mathcal E_2(v)=-\delta^{ij}\tilde R(N, E_i, v, N)E_j+\nabla \dv^\perp v.
\]
 We now examine $\nabla^\perp \widetilde \dv v$.  For codimension one, 
 \[
 \nabla^\perp \widetilde \dv v=N \widetilde \dv v N, 
 \]
 so from \eqref{diveq} and compatibility with the metric we get
\be\label{above1}
\nabla^\perp \widetilde \dv v=N\tg(\tn_{E_\al} v, E_\al)N=\left(N(\tn_i v^i)+\tg(\tn_N\tn_N v, N)+\tg(\tn_N v, \tn_NN)\right)N.
\ee
Since for codimension one, the normal component of $\tn_N\tn_N v$ is just $\tg(\tn_N\tn_N v, N)$, we have a cancelation of the normal component of $\tn_N\tn_N v$ in \eqref{org1} with the middle term on the right hand side of \eqref{above1}, which gives
\begin{align}
\tilde Lv &=Lv+nHsv+nH\tn_Nv-(\tilde \nabla_N\tilde\nabla_N v)^T+\nabla_{\tilde\nabla_N N}v+\mathcal E_2(v)\nonumber\\
& -\Big(2\sum_i\kappa_i \nabla_i v^i  +nv(H)-h(v, \tn_NN)\Big)N+\Big(N(\tn_i v^i)+\tg(\tn_N v, \tn_NN)\Big)N.\label{corlastline}
\end{align}
Finally, to arrive at $\mathcal N_3(v)$ in \eqref{b4projectBdef}, recalling $\tn_NN$ is tangential, consider
\begin{align*}
h(v, \tn_NN)+\tg(\tn_N v, \tn_NN)&=g(sv, \tn_NN)+\tg(\tn_N v, \tn_NN)\\
&=-g(\tn_vN, \tn_NN)+\tg(\tn_N v, \tn_NN)\\
&=\tg([N,v], \tn_NN),
\end{align*}
where we used \eqref{Weq} in the second line.  Inserting this into \eqref{corlastline} gives \eqref{b4projectBdef} with the first line for $\mathcal N_3$.  The second line for $\mathcal N_3$ follows from Corollary \ref{corsimple}.

\subsection{Proof of Corollary \ref{corDefdiv}}
For a hypersurface, we begin with \eqref{org1} and cancel the terms involving divergence of $v$.  This gives
\be\label{usefulnow}
\begin{split}
\tn^\ast\tn v-\widetilde \Ric v &=\nabla^\ast\nabla v-\Ric v+nHsv+nH\tn_Nv\\
&\quad-\tilde \nabla_N\tilde\nabla_N v+\nabla_{\tilde\nabla_N N}v-\delta^{ij}\tilde R(N, E_i, v, N)E_j\\
&\quad -\Big(2\sum_i\kappa_i \nabla_i v^i  +nv(H)-h(v,\tn_NN)\Big)N.
\end{split}
\ee
From \eqref{diveq}, \eqref{divperp} and Lemma \ref{lemdiv} we have
\[
0=\tg(\tn_N v, N). 
\]
So
\[
\tn_Nv=(\tn_Nv)^T,
\]
and from \eqref{Weq}
\[
sv=-(\tn_vN)^T.
\]
Putting these together we obtain
\begin{align*}
\tn^\ast\tn v-\widetilde \Ric v &=\nabla^\ast\nabla v-\Ric v+nH[N,v]^T-\tilde \nabla_N\tilde\nabla_N v+\nabla_{\tilde\nabla_N N}v\\
&\qquad -\left(2\sum_i\kappa_i \nabla_i v^i  +nv(H)-h(v,\tn_NN)\right)N-\delta^{ij}\tilde R(N, E_i, v, N)E_j,
\end{align*}
which using Corollary \ref{corsimple} is seen to be \eqref{b4projectBdefdiv}.  The formula for the Euclidean hypersurface follows then immediately.
\subsection{Proof of Corollary \ref{corHodgeLap}}
All formulas follow directly from using the formulas for the Bochner Laplacian in Theorem \ref{gaussThmM} and for the Ricci operator in Theorem \ref{ThmR}, and the fact that by the Bochner-Weitzenb\"ock formula \eqref{BW}, the Hodge Laplacian can be written as the Bochner Laplacian plus the Ricci operator.
 
\subsection{Proof of Corollary \ref{corProject}}
Considering the right hand side of formulas \eqref{GaussB1} and \eqref{GaussB2}, the first line in both formulas stays the same as each term is already tangential, we drop the third line of each formula, and take the tangential part of the second lines to arrive at \eqref{GaussB1p} since
\be\label{usesoon}
-nW_{\vH}v+n[\vH,v]^T=n(\tn_{\vH}v)^T.
\ee
The formula \eqref{projectB} for the hypersurface can then be obtained by either writing \eqref{GaussB1p} when codimension $k=1$ (and using the Gauss formula \ref{gauss} and $\tn_N^\perp N=0$) or projecting \eqref{b4projectB} or \eqref{b4projectB2}.

To see the formula \eqref{DefprojectM}, the steps are similar: drop $\mathcal N_1$ in \eqref{GaussDef} or $\mathcal N_2$ in \eqref{GaussDef2}, use \eqref{usesoon}, and project $-\sum_{l=1}^k\tn_{N_l}\tn_{N_l}v+\sum_{l=1}^k\tn_{\tn_{N_l}N_l}v$.

Next, the formula \eqref{projectBdef2} follows from \eqref{b4projectBdef} by dropping $\mathcal N_3$, and projecting $[N,v]$.  

The proofs of the remaining formulas are analogous.
 \subsection{Proof of Corollary \ref{corProjectE}}
 There are many ways to see the formulas.  One is to note that the formula \eqref{GaussB1p} remains unchanged when the ambient manifolds is $\R^{n+k}$, and then apply the formula \eqref{genRicR} for $\Ric v$ to arrive at \eqref{projectBR1}.  From there we can apply \eqref{BW} and \eqref{genRicR} again together with \eqref{Weq2} to obtain \eqref{Hodgepk}.
 
 The formula \eqref{Defproject} follows immediately from \eqref{DefprojectM}, where we also explicitly write out the operators $\tilde Lv$ and $Lv$.
 
 In the case of a hypersurface, the proof is exactly analogous, just starting with the formula 
\eqref{projectB}, which is unchanged, and then continuing with \eqref{Ricci1}, and so on.  The new component is the formula in the case of a divergence free vector field \eqref{projectBdefdivfreeR2}, which follows from \eqref{projectBR}-\eqref{projectBdefdiv} and $[N,v]^T=[N,v]=\tn_Nv+sv$ for divergence free vector fields.

\section{Proof of Theorem \ref{thm2}}
For concreteness, we work with formula \eqref{projectRBe2} in Corollary \ref{corProjectE}.  The goal is to connect  \eqref{projectRBe2} that we now know holds to formula \eqref{formulaSR}.  To that end, we begin with a proposition.

 \begin{prop}\label{doublelie} With the same setting as in Theorem \ref{thm2}, we have
\be\nonumber
\begin{split}
(\mathfrak{\iota}_{\mathrm{S}}^*\mathcal L_{N}\mathcal L_{N}v^\flat)^{\sharp_S}&=(\tn_N\tn_Nv)^{T_S}- 2 s(\mathfrak{\iota}_{\mathrm{S}}^*\mathcal L_{N}v^\flat)^{\sharp_S}- s^2v+g(v, \tn_N\tn_{E_i}N)E_i\\
  &\qquad+g(\tn_NN, \tn_NN)g(v,E_1)E_1\\
  &=(\tn_N\tn_Nv)^{T_S}-2 s(\mathfrak{\iota}_{\mathrm{S}}^*\mathcal L_{N}v^\flat)^{\sharp_S}- s^2v+v^iN(c^i_{i3})E_i
  +(c^3_{13})^2v^1E_1.
\end{split}
\ee
\end{prop}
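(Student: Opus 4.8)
The plan is to compute the left-hand side directly in the orthonormal frame $\{E_1,E_2,N\}$ by applying Corollary \ref{useful} twice, reducing everything to the structure equations \eqref{tnN}--\eqref{Weq1} together with the on-$S$ relations \eqref{useful_div}, \eqref{v0}, \eqref{Lie3}. Throughout I abbreviate $\omega:=\mathcal L_N v^\flat$, $w:=\omega^{\sharp}$, $u:=\tn_N v$, and write $w^\alpha=(\mathcal L_N v^\flat)_\alpha$, $u^\alpha=g(\tn_N v,E_\alpha)$. The one genuine subtlety, and the step I expect to be the main obstacle, is bookkeeping the ambient (neighborhood) extensions against the restriction to $S$: because $N$ is transverse to $S$, every differentiation in the $N$-direction must be carried out \emph{before} the on-$S$ identities $v^3=0$ and $u^3=g(\tn_N v,N)=0$ are invoked, and it is exactly the latter divergence-free relation that kills an otherwise-present cross term.

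First I apply Corollary \ref{useful} to $\omega$ in the direction $N$, obtaining for $i=1,2$ the identity $(\mathcal L_N\omega)_i=g(\tn_N w,E_i)+g(w,\tn_{E_i}N)$. Expanding $\tn_N w=N(w^\alpha)E_\alpha+w^1c^3_{31}N+w^3c^3_{13}E_1$ by \eqref{tnN} and using $\tn_{E_i}N=c^i_{i3}E_i$ from \eqref{Weq1}, this gives $(\mathcal L_N\omega)_1=N(w^1)+w^3c^3_{13}+c^1_{13}w^1$ and $(\mathcal L_N\omega)_2=N(w^2)+c^2_{23}w^2$, so that $(\iota_S^*\mathcal L_N\omega)^{\sharp_S}=(\mathcal L_N\omega)_iE_i$ and the problem reduces to evaluating $N(w^i)$. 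For this I apply Corollary \ref{useful} a second time, now to $v^\flat$, to write $w^\alpha=u^\alpha+g(v,\tn_{E_\alpha}N)$ as a function identity on the neighborhood; differentiating along $N$ with metric compatibility yields $N(w^i)=N(u^i)+g(u,\tn_{E_i}N)+g(v,\tn_N\tn_{E_i}N)$.

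Next I identify the surviving pieces at $p$. Expanding $\tn_N u$ by \eqref{tnN} gives $(\tn_N\tn_N v)^i=N(u^i)+\delta_{i1}u^3c^3_{13}$, and the extra term vanishes by \eqref{useful_div}, so $N(u^i)=(\tn_N\tn_N v)^i$; this is where the divergence-free hypothesis enters. The two remaining $c^i_{i3}$-terms, namely $g(u,\tn_{E_i}N)=c^i_{i3}u^i$ from the second application and $c^i_{i3}w^i$ from the first, combine via $u^i=w^i-c^i_{i3}v^i$ and the on-$S$ value $c^i_{i3}=-\kappa^i$ into $-2\kappa^iw^iE_i-(\kappa^i)^2v^iE_i=-2s(\iota_S^*\mathcal L_Nv^\flat)^{\sharp_S}-s^2v$. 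The term $w^3c^3_{13}E_1$ becomes $(c^3_{13})^2v^1E_1$ by \eqref{Lie3}, while $g(v,\tn_N\tn_{E_i}N)E_i$ is carried along unchanged. Collecting these proves the first displayed equality.

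Finally, for the second equality I compute $\tn_N\tn_{E_i}N$ explicitly: since $\tn_{E_i}N=c^i_{i3}E_i$ holds on the neighborhood, \eqref{tnN} gives $\tn_N\tn_{E_1}N=N(c^1_{13})E_1+c^1_{13}c^3_{31}N$ and $\tn_N\tn_{E_2}N=N(c^2_{23})E_2$. Pairing with $v$ and discarding the normal contribution by $v^3=0$ from \eqref{v0} leaves $g(v,\tn_N\tn_{E_i}N)E_i=v^iN(c^i_{i3})E_i$, and since $\tn_NN=c^3_{13}E_1$ we have $g(\tn_NN,\tn_NN)=(c^3_{13})^2$, which matches the two forms of the right-hand side. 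Once the order of differentiation and restriction is fixed as above, every remaining step is a mechanical application of Corollary \ref{useful}, the structure equations, and the divergence-free relations.
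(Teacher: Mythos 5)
Your proof is correct and takes essentially the same route as the paper's: both apply Corollary \ref{useful} twice (once to $\mathcal L_N v^\flat$ in the direction $N$, once to $v^\flat$ with a subsequent differentiation along $N$), then simplify with \eqref{tnN}, \eqref{Weq1}, \eqref{useful_div}, \eqref{Lie3} and $v^3=0$, keeping the restriction to $S$ until after all $N$-derivatives are taken. The only cosmetic difference is that you expand $\tn_N(\mathcal L_N v^\flat)^\sharp$ directly as a vector field in the frame, whereas the paper expands the $1$-form $\tn_N\mathcal L_N v^\flat$ (including the $\tn_N E^\alpha$ terms), which is equivalent via \eqref{special27}.
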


\begin{proof}
By Corollary \ref{useful} and properties of the connection we have
\begin{align*}
\ip{\mathcal L_{N}\mathcal L_{N}v^\flat, E_i}=g(\tn_{N}(\mathcal L_{N}v^\flat)^\sharp, E_i)+g((\mathcal L_{N}v^\flat)^\sharp, \tn_{E_i}N).
\end{align*}
To handle the first term we note
\[
\tn_{N}(\mathcal L_{N}v^\flat)^\sharp=(\tn_{N}\mathcal L_{N}v^\flat)^\sharp,
\]
and that again by Corollary \ref{useful} and properties of the connection
\begin{align*}
\tn_{N}\mathcal L_{N}v^\flat&=N(\mathcal L_{N}v^\flat)_\alpha E^\alpha+(\mathcal L_{N}v^\flat)_\alpha\tn_{N}E^\alpha\\
&=g(\tn_N\tn_{N}v,E_\alpha)E^\alpha+g(\tn_{N}v, \tn_NE_\alpha)E^\alpha+g(\tn_N v, \tn_{E_\alpha}N)E^\alpha\\
&\quad+g(v, \tn_N\tn_{E_\alpha}N)E^\alpha+(\mathcal L_{N}v^\flat)_\alpha\tn_{N}E^\alpha.
\end{align*}

Using \eqref{tnN} we obtain
\begin{align*}
\ip{\mathcal L_{N}\mathcal L_{N}v^\flat, E_i}&= 
g(\tn_N\tn_{N}v,E_i)+g(\tn_{N}v, \tn_NE_i)+g(\tn_N v, \tn_{E_i}N)+g(v, \tn_N\tn_{E_i}N)\\
&\quad+\delta_{i1}{c^3_{13}}(\mathcal L_{N}v^\flat)_3+g((\mathcal L_{N}v^\flat)^\sharp, \tn_{E_i}N).
 \end{align*}

From 
\[
\tn_{E_i}N=-\kappa_i E_i,
\]
we get

\begin{align*}
\ip{\mathcal L_{N}\mathcal L_{N}v^\flat, E_i}&= 
g(\tn_N\tn_{N}v,E_i)+g(\tn_{N}v, \tn_NE_i)-\kappa_ig(\tn_N v, E_i)+g(v, \tn_N\tn_{E_i}N)\\
&\quad+\delta_{i1}{c^3_{13}}(\mathcal L_{N}v^\flat)_3-\kappa_i (\mathcal L_{N}v^\flat)_i,
 \end{align*}
which we can further simplify using \eqref{tnN} and \eqref{useful_div} as 
\begin{align*}
\ip{\mathcal L_{N}\mathcal L_{N}v^\flat, E_i}&= 
g(\tn_N\tn_{N}v,E_i)-\kappa_ig(\tn_N v, E_i)+g(v, \tn_N\tn_{E_i}N)\\
&\quad+\delta_{i1}{c^3_{13}}(\mathcal L_{N}v^\flat)_3-\kappa_i (\mathcal L_{N}v^\flat)_i,
 \end{align*}
and by \eqref{Lie3} this is 
\begin{align*}
\ip{\mathcal L_{N}\mathcal L_{N}v^\flat, E_i}&= 
g(\tn_N\tn_{N}v,E_i)-\kappa_ig(\tn_N v, E_i)+g(v, \tn_N\tn_{E_i}N)\\
&\quad+\delta_{i1}({c^3_{13}})^2v^1-\kappa_i (\mathcal L_{N}v^\flat)_i.
 \end{align*}

We now again use Corollary \ref{useful}
\[
g(\tn_N v, E_i)=(\mathcal L_N v^\flat)_i-g(v, \tn_{E_i}N)=(\mathcal L_N v^\flat)_i+\kappa_i v^i,
\]
which is
\[
(\tn_N v)^T=(\mathcal L_N v^\flat)^T+sv,
\]
to arrive at
\begin{align*}
\ip{\mathcal L_{N}\mathcal L_{N}v^\flat, E_i}&= 
g(\tn_N\tn_{N}v,E_i)-(\kappa_i)^2 v^i+g(v, \tn_N\tn_{E_i}N)\\
&\quad+\delta_{i1}({c^3_{13}})^2v^1-2\kappa_i (\mathcal L_{N}v^\flat)_i.
 \end{align*}
Using \eqref{tnN} again, we can write this also as

\begin{align*}
\ip{\mathcal L_{N}\mathcal L_{N}v^\flat, E_i}&= 
g(\tn_N\tn_{N}v,E_i)-(\kappa_i)^2 v^i+g(v, \tn_N\tn_{E_i}N)\\
&\quad+\delta_{i1}g(\tn_NN, \tn_NN)g(v,E_1)-2\kappa_i (\mathcal L_{N}v^\flat)_i,
 \end{align*}
 which is equivalent to
 \be\nonumber
\begin{split}
(\mathfrak{\iota}_{\mathrm{S}}^*\mathcal L_{N}(\mathcal L_{N}v^\flat))^{\sharp_S}&=(\tn_N\tn_Nv)^{T_S}- 2 s(\mathfrak{\iota}_{\mathrm{S}}^*\mathcal L_{N}v^\flat)^{\sharp_S}- s^2v+g(v, \tn_N\tn_{E_i}N)E_i\\
&\quad  +g(\tn_NN, \tn_NN)g(v,E_1)E_1.
\end{split}
\ee

By \eqref{Weq1}
\[
 \tn_N\tn_{E_i}N=\tn_N(c^i_{i3}E_i)=N(c^i_{i3})E_i+c^i_{i3}\tn_N E_i,
\]
which by \eqref{tnN} and \eqref{v0} will only contribute $N(c^i_{i3})E_i$, so for computational purposes we can also write
\be
\begin{split}
(\mathfrak{\iota}_{\mathrm{S}}^*\mathcal L_{N}(\mathcal L_{N}v^\flat))^{\sharp_S}&=(\tn_N\tn_Nv)^{T_S}-2 s(\mathfrak{\iota}_{\mathrm{S}}^*\mathcal L_{N}v^\flat)^{\sharp_S}- s^2v+v^iN(c^i_{i3})E_i
  +(c^3_{13})^2v^1E_1,
\end{split}
\ee
and the proposition is proved.
\end{proof}
We begin now comparing formulas \eqref{formulaSR} and \eqref{projectRBe2}.  Copying over for convenience, from \eqref{projectRBe2} we have
\be\label{project2}
(\tilde \nabla^\ast \tilde \nabla v)^T=-\Delta_hv-2\Ric v+2H[N,v]^T-(\tilde \nabla_N\tilde\nabla_N v)^T+(\tilde\nabla_{\tilde\nabla_N N}v)^T,
\ee
where we used the Bochner-Weitzenb\"ock formula \eqref{BW}.  From \eqref{formulaSR}
\begin{equation}\label{formulaSR2a}
\begin{split}
\iota_{S}^* \Big \{ -\triangle v \Big \} & =   -\triangle_{h}\left( \iota_{S}^* v\right) -  \iota_{S}^* \left\{\mathcal{L}_N\mathcal{L}_N v\right\}  + \iota_{S}^* \left\{ ( \kappa_1-\kappa_2)\mathcal{L}_{N} v\right\} 
+ \frac 1{\ag^2} \iota_{S}^* \Big \{ \mathcal{L}_{\mathrm{Y}} \big ( v \big ) \Big\}\\
&\qquad+ 2(\kappa_2-\kappa_1)(\mathcal L_{N} v)_1 E^1-2\left(\frac{E_1(\ag)}{\ag}\right)^2v_1E^1,
\end{split}
\end{equation}
where
\[
Y=\frac{E_1(\abs{\nabla\rho})}{f}E_1.
\]
Using Proposition \ref{doublelie}, this means we need
\begin{align*}
&2 s(\mathfrak{\iota}_{\mathrm{S}}^*\mathcal L_{N}v^\flat)^{\sharp_S}+s^2v-v^iN(c^i_{i3})E_i
  -3(c^3_{13})^2v^1E_1\\
&\quad+ \Big( {\kappa_1-\kappa_2}\Big)(\iota_{\mathrm{S}}^*\mathcal{L}_{N} v^\flat)^\sharp
+f^2 (\iota_{\mathrm{S}}^*  \mathcal{L}_{\mathrm{Y}}v^\flat)^\sharp + 2 (\kappa_2-\kappa_1)(\iota_{\mathrm{S}}^* \mathcal L_{N} v)_1 E_1\\
&=-2\Ric v+2H[N,v]^T +(\tilde\nabla_{\tilde\nabla_N N}v)^T.
\end{align*}
From \eqref{tnN}-\eqref{t12} and \eqref{v0}, we have
\[
(\tilde\nabla_{\tilde\nabla_N N}v)^T=c^3_{13}E_1(v^i)E_i,
\]
so from \eqref{Liey} we see we need to have
\begin{align*}
&2 s(\mathfrak{\iota}_{\mathrm{S}}^*\mathcal L_{N}v^\flat)^{\sharp_S}+s^2v -(N(c^i_{i3})+\delta_{i1}{3(c^3_{13})^2})v^iE_i +\Big( {\kappa_1-\kappa_2}\Big)(\iota_{\mathrm{S}}^*\mathcal{L}_{N} v^\flat)^\sharp\\
&\quad 
+f^2\delta_{i1}E_i(c^3_{13}/f^2)v^1E_1+\delta_{i2}c^3_{13}c^2_{21}v^2E_2 +  2 (\kappa_2-\kappa_1)(\iota_{\mathrm{S}}^* \mathcal L_{N} v)_1 E_1\\
&=-2\Ric v+2H[N,v]^T.
\end{align*}
Next from Lemma \ref{Lies}, it follows we need to show
\begin{align*}
&2 s(\mathfrak{\iota}_{\mathrm{S}}^*\mathcal L_{N}v^\flat)^{\sharp_S}+s^2v -(N(c^i_{i3})+\delta_{i1}{3(c^3_{13})^2})v^iE_i +\Big( {\kappa_1-\kappa_2}\Big)(\iota_{\mathrm{S}}^*\mathcal{L}_{N} v^\flat)^\sharp\\
&\quad 
+f^2\delta_{i1}E_i(c^3_{13}/f^2)v^1E_1+\delta_{i2}c^3_{13}c^2_{21}v^2E_2 +  2 (\kappa_2-\kappa_1)(\iota_{\mathrm{S}}^* \mathcal L_{N} v)_1 E_1\\
&=-2\Ric v+2H((\mathcal L_{N}v^\flat)^\sharp)^T+4Hsv.
\end{align*}

From this, we derive the need for the following two equations to hold, one corresponding to the coefficient of the Lie derivative term, and one to $v^i$.
\begin{align}
2 {\kappa_i} -\kappa_2+2\delta_{i1} (\kappa_2-\kappa_1)
&=\kappa_2,\label{main1}\\
-(N(c^i_{i3})+\delta_{i1}{3(c^3_{13})^2})+f^2 (\delta_{i1}E_i(\frac{c^3_{13}}{f^2})+\frac1{f^2}\delta_{i2}c^3_{13}c^2_{21})&=(\kappa_i)^2\label{main2}.
\end{align}

We see \eqref{main1} holds as needed, both for $i=1$ and $i=2$.  We next discuss \eqref{main2}.
\subsection{Verifying equation \ref{main2} }
In computations below, we always evaluate at $\rho=1$.  
We begin with $i=2$ as it is simpler.  For $i=2$, \eqref{main2} is
\be
-N(c^2_{23})+ c^3_{13}c^2_{21}=(\kappa_2)^2\label{main22}.
\ee
A direct computation using \eqref{grr}, \eqref{c223}, and \eqref{kappas} shows
\begin{align}
N(c^2_{23})&= \frac{\kappa_2}{f}+\frac{(\dot a a+\dot b b)}{f}\left (\frac{\ddot b}{a}-\frac{\dot a \dot b}{a^2}\right).
\end{align}
Similarly, from \eqref{c313} and \eqref{c212}
\be
c^3_{13}c^2_{21}=\left(\frac{a\ddot b-b\ddot a}{f}\right)\frac {\dot a} a.
\ee
Plugging into \eqref{main22}, and using that we have a unit speed parametrization, we can verify \eqref{main22} holds as needed.

For $i=1$, \eqref{main2} becomes
\be\label{main21}
-N(c^1_{13})-3(c^3_{13})^2+E_1(c^3_{13})-2\frac{c^3_{13}}{f}E_1(f)=(\kappa_1)^2.
\ee
From \eqref{c313} and $\ag=f^{-1}$, we have
\[
c^3_{13}=-\frac{E_1(f)}{f}=-\frac{\dot f}{f},
\]
and (recalling we are evaluating at $\rho=1$)
\[
E_1(c^3_{13})=-\partial_t (\frac{\dot f}{f})=-\frac{\ddot f}{f}+\Big(\frac {\dot f}{f}\Big)^2,
\]
so we can simplify \eqref{main21} as follows
\begin{align}\label{main21a}
 -N(c^1_{13})-\frac{\ddot f}{f}=(\kappa_1)^2.
\end{align}
Next, from  \eqref{c13} and \eqref{kappas}
\[
c^1_{13}=-\frac{\ddot a a+\ddot bb}{\rho f}, \quad \kappa_1=\frac{\ddot a a+\ddot bb}{f},
\]

and using $g^{\rho t}=-\frac{g_{\rho t}}{f^2}$, a direct computation gives
 
\begin{align*}
N(c^1_{13})&=\frac{\kappa_1}{f}-fg^{\rho t}\partial_t\left(\frac{\ddot a a+\ddot b b}{f}\right)\\
&=\frac{\kappa_1}{f}(1-\frac{ g_{\rho t}}{f}\dot f)+\frac{g_{\rho t}}{f^2}\partial_t(\ddot a a+\ddot b b).
\end{align*}
 
 We now make the following observations.  From the unit speed parametrization, we can write
 \be\label{kappa1new}
\kappa_1=\frac{\dot g_{\rho t}-1}{f},
\ee
and a computation shows
\be\label{lesspainful}
f\dot f= g_{\rho t}(1-\dot  g_{\rho t}).
\ee
It follows
\[
-g_{\rho t}\kappa_1=\dot f.
\]
Using this, we have
\begin{align*}
N(c^1_{13}) =\frac{\kappa_1}{f}+\frac{\dot f^2}{f^2}+\frac{g_{\rho t}}{f^2}\ddot g_{\rho t}.
\end{align*}

Plugging into \eqref{main21a}, we see we need to verify
\be\label{interim}
\frac{\kappa_1}{f}+\frac{\dot f^2}{f^2}+\frac{g_{\rho t}}{f^2}\ddot g_{\rho t}+\frac{\ddot f}{f}=-(\kappa_1)^2.
\ee
Differentiating \eqref{lesspainful} gives
\[
g_{\rho t} \ddot g_{\rho t}+f\ddot f=\dot g_{\rho t}-(\dot g_{\rho t}^2+\dot f^2).
\]
Hence \eqref{interim} simplifies to
\be\label{interim2}
\frac{\kappa_1}{f}+\frac{\dot g_{\rho t}}{f^2}-\frac{\dot g_{\rho t}^2}{f^2}=-(\kappa_1)^2.
\ee
We can see \eqref{interim2} holds as needed by using \eqref{kappa1new}.

\subsection{Proof of Corollary \ref{corthm2}}
Formula \eqref{formulaSR2} follows from  formula \eqref{formulaSR} by using

\[
(\mathcal L_N v)_i=\frac 1 \ag (\mathcal L_{\nabla \rho} v)_i,
\]
and
\[
(\mathcal L_N \mathcal L_N v)_i=(\mathcal L_{\nabla \rho}\frac 1{\ag^2}\mathcal L_{\nabla \rho}v)_i+\frac{N(\ag)}{\ag}(\mathcal L_N v)_i-2\delta_{i1}\left(\frac{E_1(\ag)}{\ag}\right)^2v^1.
\]

 \section{Examples}\label{examples}
 In this section we consider some examples.  We begin with the sphere in $\R^3$.
 
\subsection{Sphere, $S^2\hookrightarrow \R^3$}\label{sphereEx}
We set up all the computational tools.  First, if we let $N$ be the outward pointing normal to $S^2$, then $\kappa_i=-1, i=1, 2$, so
 \[
 sv=-v \quad\mbox{and}\quad s^2v=v.
\]
 We can let $N=\partial_\rho$ , where $\rho$ is the radial variable in the spherical coordinates.  If $(\phi, \theta)$ are the standard coordinates on the sphere (without the North and South pole), denoting the polar and the azimuthal angle, respectively, then 
\[
E_1=\partial_\phi, \quad E_2=\frac{\partial_\theta}{\sin \phi},
\]
form an ON frame on the sphere, and correspond to the eigenvectors of the shape operator.  This frame also naturally extends to all of $\R^3\setminus \{0\}$ (giving the standard ON frame in spherical coordinates)
\[
E_1=\frac{\partial_\phi}{\rho}, \quad E_2=\frac{\partial_\theta}{\rho\sin \phi},\quad E_3=N=\partial_\rho.
\]
Let $v \in \mathfrak X(S^2)$, then on $S^2$, $v=v^iE_i$, and in the neighborhood of $S^2$, an extension of $v$, still denoted by $v$, satisfies
\[
v=v^\alpha E_\alpha, \quad v^3|_{S^2}=0.
\]
Then we can compute, either directly or using the formulas in Section \ref{dictionary} with $t=\phi, (a(\phi), b(\phi))=(\sin \phi, \cos \phi)$ as the parametrization,
  \be\label{tnnns}
\tn_NN=\tn_{\partial_{\rho}}\partial_\rho=0,
\ee
and 
\[
\tn_N v=\partial_\rho v^\alpha E_\alpha.
\]
So
\[
\tn_N\tn_N v=\tn_N(\partial_\rho v^\alpha E_\alpha)=\partial_\rho^2 v^\alpha E_\alpha.
\]
Also
\begin{align*}
2\kappa_i \nabla_i v^i&=-2\nabla_i v^i=-2\dv_{S^2}v,\\
(\dv h)(v)&=nv(H)=0=h(v,\tn_N N),\\
[N,v]&=\tn_N v-\tn_v N=\partial_\rho v^\alpha E_\alpha+sv,\\
\Ric v&=v,
\end{align*}
where we use Corollary \ref{corsimple} in the second line in the first equality (and it can be checked directly, too).
Then the formulas \eqref{b4projectB2} and \eqref{b4projectRBe} are the same and read
\be\label{sphere1}
\tilde \nabla^\ast \tilde \nabla v=\nabla^\ast  \nabla v+s^2v+nH\tn_Nv-\tilde \nabla_N\tilde\nabla_N v+2\dv_{S^2}v N,
\ee
whereas \eqref{b4projectRBe2} and \eqref{b4projectRB2} give
\be\label{sphere2}
\begin{split}
\tilde \nabla^\ast \tilde \nabla v &=\nabla^\ast  \nabla v-\Ric v+nH[N,v]-\tilde \nabla_N\tilde\nabla_N v+2\dv_{S^2}v N.
\end{split}
\ee

Comparing, we see that in the case of the sphere, the two formulas differ only by the second and third terms on the right.  As mentioned before, in general, the formula  \eqref{b4projectB2}, which is now \eqref{sphere1} here, is interesting, because it is the same regardless if the ambient manifold is a general manifold or the Euclidean space.  On the other hand, it does have more terms that individually are not intrinsic: the only term that is purely intrinsic is the Bochner Laplacian.  In the case of \eqref{sphere2}, the intrinsic terms are what is equivalent to the deformation Laplacian (for divergence free vector fields, cf, \eqref{BW}): $\nabla^\ast  \nabla v-\Ric v$.

If, as discussed in Section \ref{closerlook}, from the tangential terms we took the intrinsic and extrinsic terms, but not those that depend on the extension, we would get both from \eqref{sphere1} and \eqref{sphere2} (using \eqref{Ricci1} and \eqref{Weq})
\be
(\tilde \nabla^\ast \tilde \nabla v)^T=\nabla^\ast  \nabla v+s^2v=\nabla^\ast  \nabla v+v=\Delta_h v,
\ee
by \eqref{BW}, which is the Hodge Laplacian.

We continue with this example to illustrate what happens when we consider different extensions.  From the above computations we have
\begin{align}\label{usenext}
\tilde \nabla^\ast \tilde \nabla v =\nabla^\ast  \nabla v+v-2\partial_\rho v^\alpha E_\alpha-\partial_\rho^2 v^\alpha E_\alpha+2\dv_{S^2}v N.
\end{align}

We now take a closer look by considering different vector field extensions.  For example, we can extend $v=v^iE_i$, by doing \emph{nothing}, in the sense that we just let the extension be
\be\label{parallelt}
v=v^1E_1+v^2E_2,
\ee
where we think of $E_i$ as vector fields defined in all of $\R^3\setminus \{0\}$.  Then $v^i$ are independent of $\rho$ and we immediately get from \eqref{usenext}
\begin{align*}
(\tilde \nabla^\ast \tilde \nabla v)^T&=\nabla^\ast  \nabla v+ v=-\Delta_h v,\\
(\tilde \nabla^\ast \tilde \nabla v)^\perp&=2\dv_{S^2}v N.
\end{align*}
 On the other hand, if we write $v$ in coordinates, i.e., 
 \[
 v=v^\phi\partial_\phi+v^\theta \partial_\theta,
 \]
and then do an extension by doing \emph{nothing}, and write
 \[
 v=v^\phi\partial_\phi+v^\theta \partial_\theta,
 \]
then in the frame,
\[
v=\rho v^\phi E_1+\rho\sin\phi v^\theta E_2.
\]
It follows
\[
\partial_\rho v^i|_{p\in S^2}=v^i, \quad \partial^2_\rho v^i|_{p\in S^2}=0, 
\]
and $v^3=0$, so
\[
 \partial_\rho v^3=\partial^2_\rho v^3=0.
\]
 Hence
 \begin{align*}
(\tilde \nabla^\ast \tilde \nabla v)^T&=\nabla^\ast  \nabla v-v,\\
(\tilde \nabla^\ast \tilde \nabla v)^\perp&=2\dv_{S^2}v N,
\end{align*}
 where the tangential component is the deformation Laplacian.  
 
 As mentioned in the introduction, these two types of extensions were discussed in \cite{Orszag, Yamada_2018}, and the second one was used in \cite{CCM17}.   
 
 Therefore, we see that even in the case of the sphere, the formula for the projected operator depends on the extension used.  We will look at this further in an upcoming article.

  \subsection{Minimal surfaces}
  In the case of a minimal surface, say catenoid embedded in $\R^3$, we have $H=0$.  Then from \eqref{Ricci1}, we have $\Ric v=-s^2v$, and from Corollary \ref{corsimple} $\dv h=0$, so all the formulas \eqref{b4projectB2}, \eqref{b4projectRBe}, \eqref{b4projectRBe2} and \eqref{b4projectRB2} simplify to
 
 \be
\begin{split}
(\tilde \nabla^\ast \tilde \nabla v)^T &=\nabla^\ast  \nabla v-\Ric v-(\tilde \nabla_N\tilde\nabla_N v)^T+\nabla_{\tilde\nabla_N N}v,
\end{split}
\ee
and
\be 
\begin{split}
(\tilde \nabla^\ast \tilde \nabla v)^\perp &=-(\tilde \nabla_N\tilde\nabla_N v)^\perp-\left(2\sum_i\kappa_i \nabla_i v^i -h(v,\tn_NN)\right)N.
\end{split}
\ee
 
\subsection{Sphere, $S^2\hookrightarrow S^3$} Further simplifications can be obtained for totally geodesic submanifolds.  For example, since $S^2$ is a totally geodesic submanifold of $S^3$, the second fundamental form is identically zero, which in turn implies that the principal curvatures are zero.  Also, since in general on $S^n$, $\Ric v=(n-1)v$, we must have $(\widetilde\Ric v)^\perp=0$ on $S^2$.  Then the two Gauss formulas for the Bochner Laplacian, \eqref{b4projectB}  and \eqref{b4projectB2} reduce to
 
\be\label{b4projectBS2S3}
\begin{split}
\tilde \nabla^\ast \tilde \nabla v &=\nabla^\ast  \nabla v -\tilde \nabla_N\tilde\nabla_N v+\nabla_{\tilde\nabla_N N}v.
\end{split}
\ee

In addition, the deformation Laplacian formula \eqref{b4projectBdef} gives
\be\label{sphereS3}
\begin{split}
\tilde L v&=Lv-(\tilde \nabla_N\tilde\nabla_N v)^T+\nabla_{\tilde\nabla_N N}v+\mathcal E_2(v)+\mathcal N_3(v),
\end{split}
\ee
where
\begin{align*}
\mathcal E_2(v)&=-\delta^{ij}\tilde R(N, E_i, v, N)E_j+\nabla\dv^\perp v,\\
\mathcal N_3(v)&=\Big(N (\tn_i v^i)+\tg([N,v], \tn_NN)\Big)N.
\end{align*}
If the vector field is divergence free on $S^2$ and on $S^3$, then from the formula \eqref{b4projectBdefdiv} we get
 \be\label{sphereS4}
\begin{split}
\tn^\ast\tn v-\widetilde\Ric v&=\nabla^\ast\nabla v-\Ric v-\tilde \nabla_N\tilde\nabla_N v+\nabla_{\tilde\nabla_N N}v-\delta^{ij}\tilde R(N, E_i, v, N)E_j.
\end{split}
\ee
Finally, from the Hodge Laplacian formula \eqref{HodgeLapH} we obtain
 \be
\begin{split}
-\tilde \Delta_hv &= -\Delta_hv -\tilde \nabla_N\tilde\nabla_N v+\nabla_{\tilde\nabla_N N}v +\delta^{ij}\tilde R(N, E_i, v, N)E_j.
\end{split}
\ee
 We note that in fact, all the above formulas are independent of the dimension and would apply to any codimension one space form $M$ embedded as a totally geodesic submanifold into space form $\tilde M$ with the same sectional curvature as $M$.
 \subsection{Hyperbolic space: $\mathbb H^n\hookrightarrow \R^{n+k}$}
 This is where we benefit from having formulas for general codimension $k\geq 1$.  Using Nash's embedding we can write down the Gauss formula for a hyperbolic space of any dimension $n\geq 2$.  Gathering the formulas \eqref{GaussB1pagain}-\eqref{Hodgepk} from Corollary \ref{corProjectE} we have

\begin{align*}
(\tilde \nabla^\ast \tilde \nabla v)^T &=\nabla^\ast\nabla v  +\sum_{l=1}^k W_{N_l}W_{N_l}v+n(\tn_{\vH}v)^T-\sum_{l=1}^k(\tn_{N_l}\tn_{N_l}v- \tn_{\tn_{N_l}N_l}v)^T\\
                                                       &=\nabla^\ast\nabla v  -\Ric v +n[\vH, v]^T-\sum_{l=1}^k(\tn_{N_l}\tn_{N_l}v- \tn_{\tn_{N_l}N_l}v)^T\\
                                                       &=- \Delta_h v +2\sum_{l=1}^k W_{N_l}W_{N_l}v+n (\tn_{\vH}v)^T-nW_{\vH}v 
-\sum_{l=1}^k(\tn_{N_l}\tn_{N_l}v- \tn_{\tn_{N_l}N_l}v)^T.
\end{align*} 
 
 To see the normal component, we use Corollary \ref{cBE} (or Corollary \ref{corHodgeLap}).  This gives
 
 \begin{align*}
(\tilde \nabla^\ast \tilde \nabla v)^\perp &=n[\vH, v]^\perp-\sum_{l=1}^k(\tn_{N_l}\tn_{N_l}v-\tn_{\tn_{N_l}N_l}v)^\perp-2\delta^{ij}\two(E_i, \nabla_{E_j}v)\\
&=n (\tn_{\vH}v)^\perp-\sum_{l=1}^k(\tn_{N_l}\tn_{N_l}v- \tn_{\tn_{N_l}N_l}v)^\perp -(\tr \nabla^B \two) (v)-2\delta^{ij}\two(E_i, \nabla_{E_j}v).
\end{align*}
As before, the actual formulas depend on the behavior of $v$ in the neighborhood of $p \in \mathbb H^n$.  For example, if $v$ was parallel in the direction of the mean curvature vector field $\vH$, then the terms involving $\tn_{\vH}v$ could be discarded.

\subsection{Theorem \ref{thm2}: sphere}
For convenience we copy the formula.
\begin{equation}
\begin{split}
\iota_{S^2}^* \Big \{ -\triangle v^\flat \Big \} & =   -\triangle_{S^2}\left( \iota_{S^2}^* v^\flat\right) -  \iota_{S^2}^* \left\{\mathcal{L}_N\mathcal{L}_N v^\flat\right\}  + ( \kappa_1-\kappa_2 )\iota_{S^2}^*\left\{ \mathcal{L}_{N} v^\flat\right\} 
+ \frac 1{\ag^2} \iota_{S^2}^* \Big \{ \mathcal{L}_{\mathrm{Y}}  v^\flat  \Big\}\\
&\qquad+ 2(\kappa_2-\kappa_1)(\mathcal L_{N} v^\flat)_1 E^1-2\left(\frac{E_1(\ag)}{\ag}\right)^2v^\flat_1E^1.
\end{split}
\end{equation}
Since $\kappa_i=-1$, $i=1,2$, $\ag=1$, $N=\partial_\rho$ we immediately have
\begin{equation}
\begin{split}
\iota_{S}^* \Big \{ -\triangle v^\flat \Big \} & =   -\triangle_{S}\left( \iota_{S}^* v^\flat\right) -  \iota_{S}^* \left\{\mathcal{L}_{\partial_\rho}\mathcal{L}_{\partial_\rho} v^\flat\right\}  
+  \iota_{S}^* \Big \{ \mathcal{L}_{\mathrm{Y}}  v^\flat  \Big\},
\end{split}
\end{equation}
 and from $Y=\ag E_1{\ag}=\ag E_1(1)=0$, we get
\begin{equation}\label{formulaS2}
\begin{split}
\iota_{S}^* \Big \{ -\triangle v^\flat \Big \} & =   -\triangle_{S}\left( \iota_{S}^* v^\flat\right) -  \iota_{S}^* \left\{\mathcal{L}_{\partial_\rho}\mathcal{L}_{\partial_\rho} v^\flat\right\} .
\end{split}
\end{equation}
 Similarly, the formula \eqref{formulaSR2} reduces to \eqref{formulaS2} as well.
  
\subsection{Theorem \ref{thm2}: ellipsoid}
Here we show how the formula \eqref{formulaSR} is equivalent to the formula established in \cite{CCY_22}.  The formula in \cite{CCY_22} reads as follows
\begin{equation}\label{formula}
\begin{split}
\iota_E^* \left \{-\triangle v^\flat \right\} &=   -\triangle_{\mathrm{E}}\left (\iota_E^* v^\flat  \right ) + \mathcal{\tilde E} \left ( v^\flat \right )
- \sqrt{K_{\mathrm{E}}} \iota_E^* \left\{ \mathcal{L}_{\mathrm{\tilde Y}}  v^\flat  \right \}\\
&\qquad- 2 \sqrt{K_{\mathrm{E}}} \big (1- \frac{1}{|\nabla \rho |^2} \big ) (\mathcal L_{\nabla \rho} v^\flat)_1  E^1,
\end{split}
\end{equation}
where
\begin{itemize}
\item $\mathfrak i^\ast_E$ is the pullback by the inclusion map $\mathfrak i_E: E\hookrightarrow \R^3$;
\item  $ -\triangle_{\mathrm{E}}$ is the Hodge Laplacian on $E$;
\item $\mathcal{\tilde E}$ is an operator given by
\begin{equation}
\mathcal{\tilde E} =\iota_E^* \Big \{ -\mathcal{L}_{\nabla \rho} \Big ( \frac{1}{|\nabla \rho|^2}  \mathcal{L}_{\nabla \rho}     \Big ) + \Big ( 1 - \frac{1}{|\nabla \rho|^2}     \Big )\mathcal{L}_{\nabla \rho} \Big \} + \sqrt{\mathrm{K}_{\mathrm{E}}} \iota_E^* \Big \{ \frac{1}{|\nabla \rho|^2}  \mathcal{L}_{\nabla \rho}  \Big \} ,
\end{equation}
where $K_E$ is the sectional curvature of the ellipsoid;
\item $\tilde Y=\rho \partial_{\rho}$, for $\sigma$ a defining function we show below;
\item $E^1$ is the $1$-form on $E$ dual to  $E_2$, which is a unit vector field in the direction that points along the meridians on the ellipsoid;
\end{itemize}
Since formula \eqref{formulaSR} is equivalent to formula \eqref{formulaSR2}, we compare \eqref{formula} to \eqref{formulaSR2}, and we see it is enough to show
\be\label{e1}
\begin{split}
&\Big ( 1 - \frac{1}{|\nabla \rho|^2}     \Big )\mathcal{L}_{\nabla \rho}v^\flat 
+ \sqrt{\mathrm{K}_{\mathrm{E}}} \frac{1}{|\nabla \rho|^2}  \mathcal{L}_{\nabla \rho} v^\flat
- \sqrt{K_{\mathrm{E}}}   \mathcal{L}_{\mathrm{\tilde Y}}  v^\flat   \\
 &\quad=\Big( \frac{\kappa_1-\kappa_2}{\ag}-\frac{N(\ag)}{\ag^2}\Big)\mathcal{L}_{\nabla \rho} v^\flat + \frac 1{\ag^2}\iota_E^* \Big \{ \mathcal{L}_{\mathrm{Y}}  v^\flat   \Big \},
\end{split}
\ee
and
\be\label{e2}
-\sqrt{K_{\mathrm{E}}} \big (1- \frac{1}{|\nabla \rho |^2} \big ) (\mathcal L_{\nabla \rho} v^\flat)_1  E^1=\frac 1{\ag}(\kappa_2-\kappa_1)  (\mathcal L_{\nabla \rho} v^\flat)_1  E^1.
\ee
First, let $a>0$, then the ellipsoid
\[
E=\{ x^2+y^2+a^2z^2=a^2\},
\]
can be parametrized by a map
 
 \begin{equation}
\Phi(\rho, \phi, \theta)=(a\rho\sin \phi \cos\theta, a\rho \sin\phi \sin\theta, \rho\cos \phi),
\end{equation}
when $\rho=1$, so then $\rho$ is the (global) defining function.  Observe that $E$ is a surface of revolution obtained by revolving around the $z$-axis a curve parametrized by $(a \sin\phi, \cos \phi)$.  

 A computation shows
\[
\ag^2=\frac{\lambda^2}{a^2}:=\frac{a^2\cos^2\phi+\sin^2\phi}{a^2},
\]
and
\[
K_E=\frac 1{\lambda^4},
\]
 and the principal curvatures are given by
 \[
 \kappa_1=-\frac{a}{\lambda^3}, \quad \kappa_2=-\frac{1}{a\lambda}.
 \]
 Now it is easy to verify \eqref{e2} since
 \[
 -\sqrt{K_E}(1-\frac 1{\ag^2})=-\frac 1{\lambda^2}(1-\frac{a^2}{\lambda^2})=\frac a{\lambda}(-\frac{1}{a\lambda}+\frac{a}{\lambda^3})=\frac 1{\ag}(\kappa_2-\kappa_1) .
 \]
 The verification of \eqref{e1} uses \eqref{drho} and is left to an interested reader.

\bibliography{ref}
\bibliographystyle{plain}

\end{document}